\documentclass[11pt,letterpaper]{article}
\usepackage[T1]{fontenc}
\usepackage{lmodern,amsmath,amssymb,amsthm,mathtools,microtype}
\usepackage[margin=0.85in]{geometry}
\usepackage{tikz}
\usetikzlibrary{arrows.meta,positioning,fit,backgrounds}
\usepackage{enumitem,booktabs,array,caption,needspace,fancyhdr}
\usepackage[colorlinks=true,linkcolor=blue!45!black,urlcolor=blue!45!black,citecolor=blue!45!black]{hyperref}
\definecolor{ink}{RGB}{28,65,100}
\definecolor{accent}{RGB}{178,75,28}
\definecolor{pale}{RGB}{231,241,249}
\newcommand{\ES}{Erd\H{o}s--S\'os}
\newcommand{\dd}{\overline d}
\newcommand{\E}{\mathbb E}

\newcommand{\Gcut}[2]{G\big|_{(#1,#2)}}
\newtheorem{theorem}{Theorem}
\newtheorem{proposition}{Proposition}
\newtheorem{corollary}{Corollary}
\newtheorem*{ESrestatement}{Theorem 1 (restated)}
\theoremstyle{definition}\newtheorem{definition}{Definition}
\setlist{itemsep=3pt,topsep=5pt}
\hypersetup{
  pdftitle={The \ES\ Conjecture},
  pdfauthor={Jay Cummings},
  pdfsubject={A visual exposition of the proof discovered by GPT-6 Astra},
  pdfkeywords={Erdos-Sos theorem, tree embeddings, average degree, extremal graph theory}
}
\tikzset{v/.style={circle,draw=ink,fill=white,inner sep=1pt,minimum size=5.5mm,font=\small},
 edge/.style={draw=black!30,line width=.7pt},
 chosen/.style={draw=ink,line width=1.7pt},
 root/.style={v,line width=1.3pt,fill=pale},
 arrival/.style={v,draw=accent,line width=1pt,fill=accent!12}}
\newcommand{\graphpic}[3]{%
\begin{tikzpicture}[x=.85cm,y=.85cm,baseline=(current bounding box.center)]
\foreach \name/\xx/\yy in {#1}{\coordinate (\name) at (\xx,\yy);}
\foreach \a/\b in {#2}{\draw[edge] (\a)--(\b);}
\foreach \a/\b in {#3}{\draw[chosen] (\a)--(\b);}
\foreach \name/\xx/\yy in {#1}{\node[v] at (\name){$\name$};}
\end{tikzpicture}}
\newcommand{\hostpic}[3]{%
\begin{tikzpicture}[x=.85cm,y=.85cm,baseline=(current bounding box.center)]
\path[use as bounding box] (-.35,-.35) rectangle (2.35,2.35);
\coordinate (a) at (0,1); \coordinate (b) at (1,2);
\coordinate (c) at (2,1); \coordinate (d) at (0,0);
\coordinate (e) at (2,0);
\foreach \aa/\bb in {#2}{\draw[edge] (\aa)--(\bb);}
\foreach \aa/\bb in {#3}{\draw[chosen] (\aa)--(\bb);}
\foreach \name/\sty in {#1}{\node[\sty] at (\name){$\name$};}
\end{tikzpicture}}
\newcommand{\panel}[2]{\begin{minipage}[c]{.47\linewidth}\centering #1\par\smallskip\small #2\end{minipage}}
\newcommand{\threepanel}[2]{\begin{minipage}[c]{.31\linewidth}\centering #1\par\smallskip\small #2\end{minipage}}
\newenvironment{pictureblock}{\par\addvspace{8pt}\noindent\begin{minipage}{\linewidth}\centering}{\end{minipage}\par\addvspace{8pt}}

\title{\textbf{The \ES\ Conjecture}\\[6pt]
\large A Visual Exposition of the Proof Discovered by GPT-6 Astra}
\author{Jay Cummings\\[-1pt]
\small Department of Mathematics and Statistics, California State University, Sacramento}
\date{25 September 2026}
\begin{document}
\maketitle

\begin{abstract}
The Erd\H{o}s--S\'os theorem states that every graph of average degree greater than $t-2$ contains every tree on $t$ vertices. A short counting proof was discovered by GPT-6 Astra in 2026. We give a visual, reader-centered exposition of that argument whose presentation differs substantially from the original. We recast the counting objects as \emph{early neighbors} in a reveal-and-stop procedure, organize the induction through explicit partitions and reversible swaps, and develop the proof through worked examples with consistent drawings. We also give direct and probabilistic conclusions, compare this formulation with other recent expositions, and record the classical consequence $R(T;q)\le q(t-2)+2$ for multicolor Ramsey numbers of trees.
\end{abstract}

\noindent\textbf{2020 Mathematics Subject Classification.}
Primary 05C35; Secondary 05C05, 05D10.

\noindent\textbf{Keywords.}
Erd\H{o}s--S\'os theorem, tree embedding, average degree, extremal graph theory, probabilistic method.

\section{A Little Notation}
If you are unfamiliar with basic graph theory terminology and ideas, see Appendix~\ref{app:graphs}. Here we quickly note that $V(G)$ and $E(G)$ are the vertex and edge sets of $G$, $n=|V(G)|$ is its number of vertices, $e(G)=|E(G)|$ is its number of edges, and $\deg_G(v)$ is the degree of $v$. We also write $\deg(v)$ when the graph is understood. All graphs in this paper are finite, simple, and undirected; whenever average degree is used, we assume $n\ge1$. Recall that the handshaking identity says
\begin{equation}
\sum_{v\in V(G)}\deg_G(v)=2e(G),
\label{eq:handshake}
\end{equation}
because each edge contributes one to the degree of each of its two endpoints. Hence the average degree is
\begin{equation}
\dd(G)=\frac1n\sum_{v\in V(G)}\deg_G(v)=\frac{2e(G)}n.
\label{eq:average}
\end{equation}

\Needspace{6\baselineskip}
\section{The Question: Enough Edges to Contain Every Tree}
The \ES\ conjecture asks how many edges force a graph to contain every tree of a given size. Throughout, \emph{contains} means contains as a subgraph, not necessarily an induced subgraph: unused vertices and extra edges are allowed.

\begin{theorem}[The \ES\ statement]\label{thm:ES}
Let $n\ge t\ge2$. If $G$ is a finite simple graph on $n$ vertices with $\dd(G)>t-2$ (which is equivalent to saying that $e(G)>\frac{(t-2)n}{2}$), then $G$ contains every tree on $t$ vertices.
\end{theorem}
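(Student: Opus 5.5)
The plan is to prove a \emph{counting} strengthening of Theorem~\ref{thm:ES} by induction on $t$, because bare existence of a copy of a smaller tree cannot be pushed up one vertex. A tree $T'$ might embed only in places where the vertex we want to extend from has all its neighbors already used. Fix a tree $T$ on $t$ vertices, a leaf $\ell$, and its neighbor $p$, and put $T'=T-\ell$. The goal is an embedding $\varphi$ of $T'$ into $G$ such that $\varphi(p)$ has a neighbor outside $\varphi(V(T'))$. The natural quantity is the \emph{excess} $\sum_v(\deg_G(v)-(t-2))=2e(G)-(t-2)n$, which is positive by \eqref{eq:handshake} and \eqref{eq:average}. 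I would first reduce to the case $\deg_G(v)\ge \lceil (t-1)/2\rceil$ for every $v$: repeatedly delete a vertex of degree at most $(t-2)/2$. Each deletion removes at most $(t-2)/2$ edges and one vertex, so the condition $e>\frac{(t-2)n}{2}$ is preserved, and the process cannot empty the graph.

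Next I would set up a reveal-and-stop procedure that builds a copy of $T$ greedily in a breadth-first order of $T$ rooted at some vertex. Start from a root vertex $r\in V(G)$ and reveal the tree level by level. When the procedure must place the children of an already-placed vertex $x$, it scans the neighbors of $x$ in a fixed (or uniformly random) order of $V(G)$. It \emph{stops} at the first neighbor not yet used; call such neighbors \emph{early neighbors}. The procedure fails only when some placed vertex $x$ needs $k$ children but has fewer than $k$ unused neighbors. In that case all of its remaining neighbors lie among the at most $t-2$ vertices already placed other than $x$. The counting statement I would aim for is: summing over all roots $r$ (and averaging over the orderings), the total number of failures is at most $\sum_v \min\{\deg(v),t-2\}$, while the total demand is charged against $\sum_v\deg(v)$. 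The strict inequality $2e(G)>(t-2)n$ then forces a successful run for some root.

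To make the charging honest I would use the inductive hypothesis in a weighted form. For every $t'<t$ and every tree on $t'$ vertices, the number of pairs (root $r$, successful run) should be at least a positive multiple of the excess $2e(G)-(t'-2)n$. The inductive step would partition the runs for $T'$ according to whether $\varphi(p)$ is \emph{saturated}, meaning all its neighbors are in the image. For each saturated run, a reversible swap would exchange the role of $\varphi(p)$ with one of its early neighbors, re-rooting the run there. This produces an injection from saturated runs into a set of size at most $(t-2)n$. Reversibility is what makes the map injective.

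The main obstacle is exactly this swap/injection step: checking that the swap produces a valid run of the procedure and that it can be inverted. Saturation is local, but a run is a global object, and exchanging two vertices can change which neighbors count as ``early'' elsewhere in the tree. My first attempt would be to perform the swap only along the path from the root to $p$, using a leaf-last breadth-first order of $T$ so that $p$ is revealed as late as possible. I would then verify that scan positions elsewhere in the tree are unaffected. If that fails, I would fall back on averaging over a uniformly random vertex order and prove only the expected-count inequality, which is enough for existence.
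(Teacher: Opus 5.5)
There is a genuine gap. The step that carries all of the content is the reversible swap that injects ``saturated'' runs into a small set, and you flag it as the main obstacle and leave it open. The counting claims it is meant to deliver are asserted rather than derived: failures at most $\sum_v\min\{\deg(v),t-2\}$, and successes at least a multiple of the excess. In your setup the swap is unlikely to be repairable. A run of your procedure is a particular greedy embedding, and each of its choices depends on the scan order and on every earlier choice. Exchanging $\varphi(p)$ with one of its neighbors therefore changes what the procedure does throughout the tree. Nothing guarantees that the result is again a run, or lets you read off the original from it.

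The bookkeeping also does not close as set up. The excess $2e(G)-(t'-2)n$ drops by exactly $n$ when $t'$ increases by one, so passing from $T'$ to $T$ may lose only about one unit per root or per ordering. An injection into a set of size $(t-2)n$ exceeds that budget by a factor of $t-2$. Averaging over a uniformly random order is not a fallback either, because the expected-count inequality is precisely what such injections are needed to prove.

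The missing idea is to make the stopping rule depend only on the set of revealed host vertices, not on an embedding procedure. The paper reveals the vertices of $G$ in an order $(v_0,\dots,v_{n-1})$. It stops at the first stage $j$ at which $G[\{v_0,\dots,v_j\}]$ contains \emph{some} copy of $T$ rooted at $v_0$, and it counts only the neighbors of $v_0$ that arrive strictly earlier. $D(T)$ is this count summed over all $n!$ orderings. Because stopping is a monotone property of the prefix set, any rearrangement that preserves the set revealed up to the chosen neighbor preserves the revealed graph. The inverse map is recovered from the \emph{first} appearance of a smaller rooted tree.

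The induction is over rooted trees with every choice of root, and it has two cases:
\begin{itemize}
\item When the root $r$ is a leaf, take $T'=T-r$ rooted at the neighbor of $r$ and exchange $v_0$ with the chosen neighbor. This gives $D(T)\le D(T')+n!$.
\item When $r$ has degree at least two, split $T$ at $r$ into $T_1\cup T_2$ and apply the block swap $(x,R,X,Y)\mapsto(x,X,R,Y)$. This gives $D(T)\le D(T_1)+D(T_2)+n!$.
\end{itemize}
Since $t_1+t_2=t+1$, both cases yield $D(T)\le(t-2)n!$. If $G$ had no copy of $T$, every neighbor of $v_0$ would be early, so $D(T)=2e(G)(n-1)!>(t-2)n!$, a contradiction.

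Notice that the paper always extends the tree at the first vertex, which the ordering controls. Your decomposition deletes an arbitrary leaf and needs a free neighbor at $\varphi(p)$, a location your procedure does not control. It also has no counterpart of the branching case. Your minimum-degree reduction is correct but does no work in the argument.
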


For large $t$, there are many different shapes that a tree $T$ can take on $t$ vertices. What this conjecture is saying is that as long as $G$ has enough edges, every single one of these tree shapes exists within $G$ as a subgraph.

The strict inequality matters. For example, recall that $G$ need not be connected, and consider a $G$ built in this way: Take disjoint copies of the complete graph $K_{t-1}$, the graph with every possible edge on $t-1$ vertices. Every vertex has degree $t-2$, but every connected component is too small to contain a $t$-vertex tree. Thus equality cannot suffice in general, even for perfectly balanced degrees.
\begin{pictureblock}
\panel{\graphpic{a/0/0,b/1/1.4,c/2/0,d/3/0,e/4/1.4,f/5/0}{a/b,a/c,b/c,d/e,d/f,e/f}{a/b,a/c,b/c,d/e,d/f,e/f}}{For $t=4$: two separate triangles, all degrees $2$.}
\hfill
\panel{\graphpic{a/0/0,b/0/1.6,c/1.6/1.6,d/1.6/0,e/3/0,f/3/1.6,g/4.6/1.6,h/4.6/0}{a/b,a/c,a/d,b/c,b/d,c/d,e/f,e/g,e/h,f/g,f/h,g/h}{a/b,a/c,a/d,b/c,b/d,c/d,e/f,e/g,e/h,f/g,f/h,g/h}}{For $t=5$: two separate copies of $K_4$, all degrees $3$.}
\captionof{figure}{At the threshold, small complete components obstruct every target tree of the next size.}
\end{pictureblock}

\Needspace{6\baselineskip}
\section{Historical Context and the Source of This Proof}\label{sec:history}
Paul Erd\H{o}s and Vera T. S\'os posed this problem in the early 1960s. Chung's survey dates the conjecture to 1962 \cite[Problem~68]{chung}; its standard published reference is Erd\H{o}s's 1964 problem collection \cite{erdos1964}. It became a central problem in extremal graph theory: how many edges force a prescribed subgraph? As quoted by Adamczewski and Bloom, Fan Chung described it as ``one of the most tantalizing problems in extremal graph theory'' \cite[Appendix~B.4]{frontier}.

Some target trees were understood long before the general statement. For a star on $t$ vertices, average degree greater than $t-2$ guarantees a vertex of degree at least $t-1$, which supplies its center and leaves. The path case follows from the classical Erd\H{o}s--Gallai bound on graphs without a long path. Further work established the conjecture for various families of trees, including double stars and other restricted shapes. A modern account of these earlier results is given in \cite{bounded}.

In the early 1990s, Ajtai, Koml\'os, Simonovits, and Szemer\'edi announced a proof for sufficiently large trees \cite{bounded}. The distinction between an announced proof and a published account matters here: the 2023 article \cite{beyond}, and again Reed and Stein's September 2026 preprint \cite{dense}, describe the earlier announcement as lacking a published manuscript.

Progress continued in the intervening years. Besomi, Pavez-Sign\'e, and Stein proved results for bounded-degree trees in dense host graphs \cite{bounded}. Pokrovskiy's 2024 preprint proved the conjecture for sufficiently large trees with a fixed bound on their maximum degree, without requiring a dense host graph \cite{pokrovskiy}. Davoodi, Piguet, \v{R}ada, and Sanhueza-Matamala established an asymptotic version for dense host graphs without bounding the target tree's degree, presented in a 2023 conference paper \cite{beyond} and developed in a March 2026 preprint \cite{asymptotic}.

In September 2026, Reed and Stein posted a proof of the exact conjecture when the target tree has at least a fixed positive fraction of the host graph's vertices and the host is sufficiently large \cite{dense}. They state that their proof was developed without AI and completed before the announcement of Astra's proof; their preprint was posted on 4 September.

The September 2026 \emph{FrontierMath Erd\H{o}s} report by Tom Adamczewski and Thomas F. Bloom attributes a proof of the full conjecture to a pre-release GPT-6 Astra \cite[Appendix~B.4]{frontier}. The original counting proof is available as \cite{astra}, and the accompanying repository records the autonomous proof search and its Lean formalization \cite{lean}.\footnote{The repository's externally compared theorem uses $e(G)\ge(t-2)n/2+1$, a slightly stronger hypothesis than $e(G)>(t-2)n/2$ when $(t-2)n$ is odd. Its internal counting lemma nevertheless gives the sharp classical bound. We use the strict inequality throughout.} The report credits Bloom with the more detailed informal expositions on the corresponding Erd\H{o}s Problems website pages, including his account of this proof \cite{bloom}.

In the introduction to Appendix~B, Adamczewski and Bloom describe those informal expositions as provisional sketches and call for traditional papers by human experts that supply fuller details and context \cite{frontier}. This article seeks to contribute to that task. An earlier four-page illustrated account \cite{illustrated}, which credits Bloom but does not identify its own author, was the starting point for this exposition.

Several recent papers offer other perspectives. Riordan and Scott \cite{riordan-scott} count \emph{jumping edges} and also determine the extremal graphs. Wood \cite{wood} gives a detailed account using ordering--neighbor pairs. Frederickson \cite{frederickson} uses random cyclic orderings and trees with both a specified root vertex and a specified incident edge. Section~\ref{sec:comparisons} explains how these viewpoints relate to ours.

The present article retains Astra's central counting argument, but changes its presentation in three ways: it counts \emph{early neighbors} through a reveal-and-stop procedure; it organizes the induction around explicit partitions and reversible swaps; and it develops the proof through worked examples with consistent drawings, followed by direct and probabilistic conclusions. Its contribution is a reader-centered, visual exposition of the argument, not a new resolution of the conjecture. The additional detail is intended to make clear what is counted, why a rearrangement can be reversed, and where each inequality comes from.

The method has also prompted extensions. Mubayi and Verstra\"ete apply it to tight trees in hypergraphs and to oriented trees in Eulerian digraphs, with the proofs attributed to GPT-6 Astra \cite{kalai,digraphs}. Section~6 of Santos, Stein, and Williams \cite{butterflies} adapts the argument to antidirected trees in digraphs; this is an addition to a paper whose main approach uses regularity methods. Riordan--Scott and Frederickson also prove the antidirected extension. These directed results are distinct from the Eulerian result. We stay with undirected graphs here and conclude with a short classical application to edge colorings in Section~\ref{sec:ramsey}. This historical overview reflects sources checked through 24 September 2026; it is not a comprehensive survey.

\subsection*{Why Average Degree Is a Demanding Hypothesis}
There is an easy statement with \emph{minimum} degree in place of average degree: if $G$ is nonempty and every vertex has degree at least $t-1$, then $G$ contains every $t$-vertex tree. Root the tree anywhere and embed it one vertex at a time, always placing a parent before its children. When $s<t$ vertices have been placed, an already placed parent has at most $s-1$ neighbors among those vertices. Its degree is at least $t-1$, so it has an unused neighbor for the next child.

The difficulty is that an average says little about an individual vertex. Some vertices may have very large degree while others have hardly any neighbors. The \ES\ statement nevertheless guarantees \emph{every} tree shape at the lower threshold $\dd(G)>t-2$. We next examine both balanced and concentrated degree patterns before proving the general result.

\Needspace{6\baselineskip}
\section{The First Example: Trees on Four Vertices}

To better understand this conjecture, let's look at two small cases, $t=4$ and $t=5$. For each, we consider graphs whose degrees are as balanced as possible and graphs that concentrate edges at as many universal vertices as the edge count permits.

For the case that $t=4$, we wish to show that our graph $G$ contains every tree on four vertices. There are, in fact, two such trees: the path $P_4$, and the star $K_{1,3}$. 
\begin{pictureblock}
\panel{\graphpic{a/0/0,b/1/0,c/2/0,d/3/0}{a/b,b/c,c/d}{a/b,b/c,c/d}}{$P_4$: degree sequence $(2,2,1,1)$.}
\hfill
\panel{\graphpic{a/1/1,b/0/0,c/2/0,d/1/2}{a/b,a/c,a/d}{a/b,a/c,a/d}}{$K_{1,3}$: degree sequence $(3,1,1,1)$.}
\captionof{figure}{Both trees must occur whenever $e(G)>n$.}
\end{pictureblock}
\subsection*{Example of a Balanced $G$: A Cycle Plus One Chord}
Start with a cycle on $n\ge4$ vertices. Every degree is $2$, and there are $n$ edges. A path on four consecutive vertices is already present, but a three-leaf star is impossible because no vertex has degree $3$.

Add an edge between two previously nonadjacent cycle vertices, called a \emph{chord}. Now $e(G)=n+1$ and $\dd(G)=2+2/n>2$. The chord's endpoints have degree $3$; all other degrees remain $2$. For example, let $G$ be the following six-vertex graph.
\begin{pictureblock}
\graphpic{a/0/1,b/1/2,c/2/2,d/3/1,e/2/0,f/1/0}{a/b,b/c,c/d,d/e,e/f,f/a,a/d}{a/b,b/c,c/d,d/e,e/f,f/a,a/d}
\captionof{figure}{The host graph $G$: a six-cycle with the chord $ad$. It has seven edges and average degree $7/3>2$.}
\end{pictureblock}
This $G$ contains both trees on four vertices, as shown below. The path uses four consecutive cycle vertices. For the star, the two cycle neighbors of $a$ and its chord neighbor give three leaves. Edges between these neighbors, if present, would be harmless.
\begin{pictureblock}
\panel{\graphpic{a/0/1,b/1/2,c/2/2,d/3/1,e/2/0,f/1/0}{a/b,b/c,c/d,d/e,e/f,f/a,a/d}{a/b,b/c,c/d}}{A path was already there: $a-b-c-d$.}
\hfill
\panel{\graphpic{a/0/1,b/1/2,c/2/2,d/3/1,e/2/0,f/1/0}{a/b,b/c,c/d,d/e,e/f,f/a,a/d}{a/b,a/f,a/d}}{The new chord supplies a third neighbor of $a$.}
\captionof{figure}{The same host graph, with the two required trees highlighted separately.}
\end{pictureblock}
\subsection*{Example of a Concentrated $G$: One Universal Vertex}
A vertex adjacent to every other vertex is called \emph{universal}; it has degree $n-1$. A star on $n$ vertices has one such center, but its $n-1$ edges give average degree $2-2/n$, below the threshold. Add two distinct edges among the leaves. We now have $n+1$ edges and average degree $2+2/n$. For example, let $G$ be the following graph on five vertices.
\begin{pictureblock}
\graphpic{u/1.5/1.5,a/0/0,b/1/0,c/2/0,d/3/0}{u/a,u/b,u/c,u/d,a/b,c/d}{u/a,u/b,u/c,u/d,a/b,c/d}
\captionof{figure}{The host graph $G$: a four-leaf star with the two additional edges $ab$ and $cd$.}
\end{pictureblock}
This $G$ contains both trees on four vertices, as shown below. The three-leaf star remains centered at $u$. To find $P_4$, take the added edge $ab$ and a third leaf $c$; then $b-a-u-c$ is a path. In fact, one added edge already creates this path, but two are required to meet the strict density hypothesis.
\begin{pictureblock}
\panel{\graphpic{u/1.5/1.5,a/0/0,b/1/0,c/2/0,d/3/0}{u/a,u/b,u/c,u/d,a/b,c/d}{u/a,u/b,u/c}}{The center still supplies the star.}
\hfill
\panel{\graphpic{u/1.5/1.5,a/0/0,b/1/0,c/2/0,d/3/0}{u/a,u/b,u/c,u/d,a/b,c/d}{b/a,a/u,u/c}}{An edge among the leaves supplies the path.}
\captionof{figure}{For $n=5$, both panels have $6=n+1$ edges.}
\end{pictureblock}
With $n+1$ edges to place in the graph, one universal vertex is the maximum when $n\ge5$, since two require $2n-3>n+1$ edges. When $n=4$, the resulting five-edge graph has two universal vertices instead. The examples illustrate the theorem; they are not a classification of all graphs above its threshold.

\Needspace{6\baselineskip}
\section{The Next Example: Trees on Five Vertices}
Now there are three shapes: the path $P_5$, the star $K_{1,4}$, and a tree obtained by subdividing one edge of $K_{1,3}$. Call the last tree $F_5$. It has a degree-three vertex with two short arms and one arm of length two.
\begin{pictureblock}
\threepanel{\graphpic{a/0/0,b/1.25/0,c/2.5/0,d/3.75/0,e/5/0}{a/b,b/c,c/d,d/e}{a/b,b/c,c/d,d/e}}{$P_5$\\$(2,2,2,1,1)$}
\hfill
\threepanel{\graphpic{a/1/1,b/0/1,c/2/1,d/1/0,e/1/2}{a/b,a/c,a/d,a/e}{a/b,a/c,a/d,a/e}}{$K_{1,4}$\\$(4,1,1,1,1)$}
\hfill
\threepanel{\graphpic{a/1/1,b/0/2,c/0/0,d/2/1,e/3/1}{a/b,a/c,a/d,d/e}{a/b,a/c,a/d,d/e}}{$F_5$\\$(3,2,1,1,1)$}
\captionof{figure}{All five-vertex trees. The degree sum is $8$ in every case.}
\end{pictureblock}
To check completeness, a degree-four vertex forces the star, and maximum degree two forces a path. In the remaining case, a degree-three vertex leaves exactly one further degree above one, so the degree sequence is $(3,2,1,1,1)$, forcing $F_5$.

The density condition is now $\dd(G)>3$, or $e(G)>3n/2$. A long cycle plus one chord does \emph{not} meet it: its average degree is $2+2/n$, and its maximum degree is only $3$, so it cannot contain $K_{1,4}$.

\subsection*{Balanced Example: A Cubic Graph Plus One Edge}
A \emph{cubic} graph has degree $3$ at every vertex. Adding an edge between two previously nonadjacent vertices gives those two vertices degree $4$ and leaves every other degree at $3$. The average becomes $3+2/n$, just above the threshold. For example, let $G$ be the following graph, obtained from a triangular prism by adding the edge $ae$.
\begin{pictureblock}
\graphpic{a/0/1,b/1.4/2,c/2.8/1,d/0/-1,e/1.4/0,f/2.8/-1}{a/b,b/c,c/a,d/e,e/f,f/d,a/d,b/e,c/f,a/e}{a/b,b/c,c/a,d/e,e/f,f/d,a/d,b/e,c/f,a/e}
\captionof{figure}{The host graph $G$: a cubic graph plus one edge. It has six vertices, ten edges, and average degree $10/3>3$.}
\end{pictureblock}
This $G$ contains all three trees on five vertices:

\begin{pictureblock}
\threepanel{\graphpic{a/0/1,b/1.4/2,c/2.8/1,d/0/-1,e/1.4/0,f/2.8/-1}{a/b,b/c,c/a,d/e,e/f,f/d,a/d,b/e,c/f,a/e}{d/a,a/b,b/c,c/f}}{$P_5$: $d-a-b-c-f$.}
\hfill
\threepanel{\graphpic{a/0/1,b/1.4/2,c/2.8/1,d/0/-1,e/1.4/0,f/2.8/-1}{a/b,b/c,c/a,d/e,e/f,f/d,a/d,b/e,c/f,a/e}{a/b,a/c,a/d,a/e}}{$K_{1,4}$ centered at $a$.}
\hfill
\threepanel{\graphpic{a/0/1,b/1.4/2,c/2.8/1,d/0/-1,e/1.4/0,f/2.8/-1}{a/b,b/c,c/a,d/e,e/f,f/d,a/d,b/e,c/f,a/e}{a/b,a/d,a/c,c/f}}{$F_5$: long arm $a-c-f$.}
\captionof{figure}{Three copies inside the same almost regular graph. Gray edges are present but unused.}
\end{pictureblock}

\subsection*{Concentrated Example: Two Universal Vertices}
Take two adjacent universal vertices $u,v$, each joined to all other vertices, and put no edges between the remaining vertices. This gives $2n-3$ edges and average degree $4-6/n$, exceeding $3$ exactly when $n>6$. For example, let $G$ be the following graph on seven vertices.
\begin{pictureblock}
\graphpic{u/0/1,v/3.2/1,a/0/-1,b/.8/-1,c/1.6/-1,d/2.4/-1,e/3.2/-1}{u/v,u/a,u/b,u/c,u/d,u/e,v/a,v/b,v/c,v/d,v/e}{u/v,u/a,u/b,u/c,u/d,u/e,v/a,v/b,v/c,v/d,v/e}
\captionof{figure}{The host graph $G$: two vertices of degree six and five vertices of degree two.}
\end{pictureblock}
This $G$ contains all three trees on five vertices, as shown below. With distinct remaining vertices $a,b,c,d$, use
\[
P_5:\ a-u-b-v-c;\qquad
K_{1,4}:\ ua,ub,uc,ud;\qquad
F_5:\ ua,ub,uv,vc.
\]
\begin{pictureblock}
\threepanel{\graphpic{u/0/1,v/3.2/1,a/0/-1,b/.8/-1,c/1.6/-1,d/2.4/-1,e/3.2/-1}{u/v,u/a,u/b,u/c,u/d,u/e,v/a,v/b,v/c,v/d,v/e}{a/u,u/b,b/v,v/c}}{An alternating path.}
\hfill
\threepanel{\graphpic{u/0/1,v/3.2/1,a/0/-1,b/.8/-1,c/1.6/-1,d/2.4/-1,e/3.2/-1}{u/v,u/a,u/b,u/c,u/d,u/e,v/a,v/b,v/c,v/d,v/e}{u/a,u/b,u/c,u/d}}{Four neighbors of $u$.}
\hfill
\threepanel{\graphpic{u/0/1,v/3.2/1,a/0/-1,b/.8/-1,c/1.6/-1,d/2.4/-1,e/3.2/-1}{u/v,u/a,u/b,u/c,u/d,u/e,v/a,v/b,v/c,v/d,v/e}{u/a,u/b,u/v,v/c}}{The edge $uv$ joins the arms.}
\captionof{figure}{For $n=7$: eleven edges, average degree $22/7>3$.}
\end{pictureblock}
Three universal vertices would require $3n-6>2n-3$ edges for $n>3$, so this model really does maximize their number among graphs with $2n-3$ edges. At $n=5$ or $6$, the same model is below or at the density threshold; one additional edge among the independent vertices makes the inequality strict. The displayed tree constructions still work, using $v$ as a star leaf if needed.

\subsection*{The Same Two Patterns More Generally}
In general, start with a $(t-2)$-regular graph (every vertex has degree $t-2$), and add an edge between two previously nonadjacent vertices. Assuming such a graph and such a pair exist, the average degree becomes $t-2+2/n$. The \ES\ theorem guarantees every $t$-vertex tree, even though only two vertices now have degree $t-1$.

At the other extreme, suppose the number of edges works out to $\binom{k}{2}+k(n-k)$: exactly enough to make $k$ vertices universal, with no edges among the others. For $n>k+1$, there are too few edges to make another vertex universal. The average degree is $2k-k(k+1)/n$, although the other vertices have degree only $k$. Take $k=\lfloor t/2\rfloor$ and $n$ sufficiently large; this average exceeds $t-2$, and every $t$-vertex tree is easy to find directly. Color its vertices with two colors so that every edge joins opposite colors. Put the smaller color class at universal vertices and the larger among the remaining vertices. There is room for both classes, and every required edge is present.

\Needspace{6\baselineskip}
\section{Revealing a Graph One Vertex at a Time}
Let's now discuss the proof of the Erd\H{o}s--S\'os conjecture. Fix a graph $G$ on $n\ge2$ vertices, and choose an ordering
\[
\pi=(v_0,v_1,\ldots,v_{n-1})
\]
of the vertices of $G$ in which each vertex appears exactly once. Let $\mathcal O$ denote the set of all $n!$ such orderings. Start with $v_0$, then reveal the other vertices one at a time. Whenever a vertex is revealed, also reveal all its edges to vertices already present. At each stage we therefore see an induced subgraph of $G$.

The subscripts record positions in this particular ordering, starting at zero; they are not permanent vertex labels. For example, if $\pi=(a,b,c,d,e)$, then $v_0=a$, while the first vertex of $(d,a,e,c,b)$ is $d$. The graph $G$ itself stays fixed. Revealing vertices changes only how much of it we inspect.

\begin{definition}[The revealed graph]\label{def:revealed}
For a set $S\subseteq V(G)$, let $G[S]$ denote the induced subgraph on $S$, keeping all edges of $G$ between its vertices. For $0\le j\le n-1$, write
\[
\Gcut{\pi}{j}=G[\{v_0,\ldots,v_j\}]
\]
for the graph revealed at the arrival of $v_j$.
\end{definition}
Each stage has exactly one associated graph. It includes the newly arrived vertex $v_j$, so it has $j+1$ vertices. Initially we have $\Gcut{\pi}{0}$ which consists of $v_0$ alone; at the end we have $\Gcut{\pi}{n-1}=G$.

For example, take the graph below and choose the ordering
$\pi=(b,e,a,d,c)$. Any ordering is allowed; consecutive vertices need not be adjacent.

\begin{pictureblock}

\hostpic{a/v,b/v,c/v,d/v,e/v}{a/b,b/c,a/d}{a/b,b/c,a/d}

\captionof{figure}{Our fixed host graph $G$, with edges $ab,bc,ad$
and isolated vertex $e$.}
\label{fig:host-reveal}

\end{pictureblock}

Starting with $b$ at stage zero, we reveal $e,a,d,c$ in that order.
The five stages are shown below. Notice that we retain
\emph{all} edges between the vertices revealed so far.

\begin{pictureblock}

\threepanel{\hostpic{b/v}{}{}}
{$\Gcut{\pi}{0}$: start with $b$.\\
Revealed vertex: $b$.}
\hfill
\threepanel{\hostpic{b/v,e/v}{}{}}
{$\Gcut{\pi}{1}$: after $e$ arrives.\\
Revealed vertices: $b,e$.}
\hfill
\threepanel{\hostpic{a/v,b/v,e/v}{a/b}{a/b}}
{$\Gcut{\pi}{2}$: after $a$ arrives.\\
Revealed vertices: $b,e,a$.}

\par\bigskip

\panel{\hostpic{a/v,b/v,d/v,e/v}{a/b,a/d}{a/b,a/d}}
{$\Gcut{\pi}{3}$: after $d$ arrives.\\
Revealed vertices: $b,e,a,d$.}
\hfill
\panel{\hostpic{a/v,b/v,c/v,d/v,e/v}{a/b,b/c,a/d}{a/b,b/c,a/d}}
{$\Gcut{\pi}{4}=G$: after $c$ arrives.\\
Every vertex has now been revealed.}

\captionof{figure}{The successive induced subgraphs, from the
single vertex at stage zero to the entire graph at stage four.
Vertices keep the same relative positions in the drawings so that the
changes are easy to follow; their arrival order is specified
by $\pi$.}
\label{fig:reveal-stages}

\end{pictureblock}

\Needspace{6\baselineskip}
\section{Rooted Trees and Early Neighbors}
We now use a classic strategy in combinatorics: make the statement we are trying to prove stronger, so that the added structure makes the proof easier. Instead of asking only for an unrooted copy of a tree, we specify a root and require it to land at the first vertex of an ordering. This gives us a designated place at which to attach an edge or join smaller trees. We will prove a counting bound for every choice of root of the tree; the first vertex of $G$ is allowed to vary with the ordering.

Fix a tree $T$ with a distinguished vertex $r$, called its \emph{root}, and write $t=|V(T)|\ge2$. For an ordering beginning at $x=v_0$, we say that $\Gcut{\pi}{j}$ \emph{contains $T$ rooted at $x$} if it contains a copy of $T$ in which $r$ is represented by $x$.

Our goal is to reveal a copy of $T$ rooted at the first vertex. We count the neighbors encountered before we achieve that goal.
\Needspace{18\baselineskip}
\begin{definition}[Early neighbors]\label{def:early}
Given an ordering $\pi=(v_0,\ldots,v_{n-1})$, start with $v_0$ and reveal the remaining vertices one at a time, retaining all edges between the vertices revealed so far. Stop as soon as a copy of $T$ rooted at $v_0$ appears.

The \emph{early neighbors} are the neighbors of $v_0$ that arrived \emph{strictly before} this stopping stage. In particular, if the vertex that completes the first rooted copy is itself a neighbor of $v_0$, it does not count. If no such copy ever appears, then all $\deg(v_0)$ neighbors of $v_0$ will be considered early.

Let $b_T(\pi)$ denote the number of early neighbors for the ordering $\pi$, and define
\[
D(T)=\sum_{\pi\in\mathcal O}b_T(\pi),
\]
where the sum runs over all $n!$ vertex orderings. The host graph $G$ is fixed throughout, so its dependence is suppressed in $b_T$ and $D(T)$.
\end{definition}

\subsection*{Four Examples: The Same Ordering, Different Rooted Trees}
Use the same graph $G$ and ordering $\pi=(b,e,a,d,c)$ as in
Figures~\ref{fig:host-reveal}--\ref{fig:reveal-stages}.
In each run, the target's root must be represented by $b$.
\begin{pictureblock}
\panel{\begin{tikzpicture}
\node[root](r)at(0,0){$r$};\node[v](s)at(1.4,0){$s$};
\draw[chosen](r)--(s);
\end{tikzpicture}}{$P_2$ rooted at an endpoint.}
\hfill
\panel{\begin{tikzpicture}
\node[root](r)at(0,0){$r$};\node[v](s)at(1.4,0){$s$};\node[v](z)at(2.8,0){$z$};
\draw[chosen](r)--(s)--(z);
\end{tikzpicture}}{$P_3$ rooted at an endpoint.}
\par\bigskip
\panel{\begin{tikzpicture}
\node[v](s)at(0,0){$s$};\node[root](r)at(1.2,0){$r$};\node[v](z)at(2.4,0){$z$};\node[v](w)at(3.6,0){$w$};
\draw[chosen](s)--(r)--(z)--(w);
\end{tikzpicture}}{$P_4$ rooted at an internal vertex.}
\hfill
\panel{\begin{tikzpicture}
\node[root](r)at(0,0){$r$};\node[v](s)at(1.2,0){$s$};\node[v](z)at(2.4,0){$z$};\node[v](w)at(3.6,0){$w$};
\draw[chosen](r)--(s)--(z)--(w);
\end{tikzpicture}}{$P_4$ rooted at an endpoint.}
\captionof{figure}{Four choices of rooted target. The thick blue outline and shaded interior identify the root $r$, which must be represented by $b$.}\label{fig:rooted-target}
\end{pictureblock}
In every sequence, vertices keep their positions from Figure~\ref{fig:reveal-stages}, and all edges between revealed vertices are retained. Blue shading marks the root $b$; orange marks the new arrival. Thick blue edges highlight a rooted copy only when the run succeeds.

\par\addvspace{10pt}\noindent\begin{minipage}{\linewidth}
\paragraph{$P_2$ Rooted at an Endpoint.}\mbox{}\par
\begin{pictureblock}
\begin{minipage}[t]{.165\linewidth}\centering
\hostpic{b/root}{}{}
\par\smallskip\footnotesize
\textbf{Stage 0}: $b$ alone.\\[2pt]
Start.
\end{minipage}%
\hfill $\rightarrow$ \hfill
\begin{minipage}[t]{.165\linewidth}\centering
\hostpic{b/root,e/arrival}{}{}
\par\smallskip\footnotesize
\textbf{Stage 1}: $e$ arrives.\\[2pt]
Not a neighbor.
\end{minipage}%
\hfill $\rightarrow$ \hfill
\begin{minipage}[t]{.165\linewidth}\centering
\hostpic{b/root,e/v,a/arrival}{a/b}{a/b}
\par\smallskip\footnotesize
\textbf{Stage 2}: $a$ arrives.\\[2pt]
\textbf{Stop: $T$ appears.}
\end{minipage}%

\captionof{figure}{Stop when $a$ reveals the rooted edge $b-a$. The first neighbor completes the tree, so it does not count as early; $d,c$ are still unrevealed.}\label{fig:stop-p2-endpoint}
\end{pictureblock}
\[
\text{In this example, } b_T(\pi)=0\qquad\text{(no neighbor is early).}
\]
\end{minipage}\par

\par\addvspace{10pt}\noindent\begin{minipage}{\linewidth}
\paragraph{$P_3$ Rooted at an Endpoint.}\mbox{}\par
\begin{pictureblock}
\begin{minipage}[t]{.165\linewidth}\centering
\hostpic{b/root}{}{}
\par\smallskip\footnotesize
\textbf{Stage 0}: $b$ alone.\\[2pt]
Start.
\end{minipage}%
\hfill $\rightarrow$ \hfill
\begin{minipage}[t]{.165\linewidth}\centering
\hostpic{b/root,e/arrival}{}{}
\par\smallskip\footnotesize
\textbf{Stage 1}: $e$ arrives.\\[2pt]
Not a neighbor.
\end{minipage}%
\hfill $\rightarrow$ \hfill
\begin{minipage}[t]{.165\linewidth}\centering
\hostpic{b/root,e/v,a/arrival}{a/b}{}
\par\smallskip\footnotesize
\textbf{Stage 2}: $a$ arrives.\\[2pt]
Count $a$.
\end{minipage}%
\hfill $\rightarrow$ \hfill
\begin{minipage}[t]{.165\linewidth}\centering
\hostpic{b/root,e/v,a/v,d/arrival}{a/b,a/d}{a/b,a/d}
\par\smallskip\footnotesize
\textbf{Stage 3}: $d$ arrives.\\[2pt]
\textbf{Stop: $T$ appears.}
\end{minipage}%

\captionof{figure}{Stop when $d$ reveals the rooted path $b-a-d$; $c$ is still unrevealed.}\label{fig:early-example}
\end{pictureblock}
\[
\text{In this example, } b_T(\pi)=1\qquad\text{(only $a$ is early).}
\]
\end{minipage}\par

\par\addvspace{10pt}\noindent\begin{minipage}{\linewidth}
\paragraph{$P_4$ Rooted at an Internal Vertex.}\mbox{}\par
\begin{pictureblock}
\begin{minipage}[t]{.165\linewidth}\centering
\hostpic{b/root}{}{}
\par\smallskip\footnotesize
\textbf{Stage 0}: $b$ alone.\\[2pt]
Start.
\end{minipage}%
\hfill $\rightarrow$ \hfill
\begin{minipage}[t]{.165\linewidth}\centering
\hostpic{b/root,e/arrival}{}{}
\par\smallskip\footnotesize
\textbf{Stage 1}: $e$ arrives.\\[2pt]
Not a neighbor.
\end{minipage}%
\hfill $\rightarrow$ \hfill
\begin{minipage}[t]{.165\linewidth}\centering
\hostpic{b/root,e/v,a/arrival}{a/b}{}
\par\smallskip\footnotesize
\textbf{Stage 2}: $a$ arrives.\\[2pt]
Count $a$.
\end{minipage}%
\hfill $\rightarrow$ \hfill
\begin{minipage}[t]{.165\linewidth}\centering
\hostpic{b/root,e/v,a/v,d/arrival}{a/b,a/d}{}
\par\smallskip\footnotesize
\textbf{Stage 3}: $d$ arrives.\\[2pt]
Not a neighbor.
\end{minipage}%
\hfill $\rightarrow$ \hfill
\begin{minipage}[t]{.165\linewidth}\centering
\hostpic{b/root,e/v,a/v,d/v,c/arrival}{a/b,b/c,a/d}{a/b,b/c,a/d}
\par\smallskip\footnotesize
\textbf{Stage 4}: $c$ arrives.\\[2pt]
\textbf{Stop: $T$ appears.}
\end{minipage}%

\captionof{figure}{Stop when $c$ reveals the rooted path $c-b-a-d$, with $b$ internal.}\label{fig:stop-p4-internal}
\end{pictureblock}
\[
\text{In this example, } b_T(\pi)=1\qquad\text{(only $a$ is early; $c$ completes the copy).}
\]
\end{minipage}\par

\par\addvspace{10pt}\noindent\begin{minipage}{\linewidth}
\paragraph{$P_4$ Rooted at an Endpoint.}\mbox{}\par
\begin{pictureblock}
\begin{minipage}[t]{.165\linewidth}\centering
\hostpic{b/root}{}{}
\par\smallskip\footnotesize
\textbf{Stage 0}: $b$ alone.\\[2pt]
Start.
\end{minipage}%
\hfill $\rightarrow$ \hfill
\begin{minipage}[t]{.165\linewidth}\centering
\hostpic{b/root,e/arrival}{}{}
\par\smallskip\footnotesize
\textbf{Stage 1}: $e$ arrives.\\[2pt]
Not a neighbor.
\end{minipage}%
\hfill $\rightarrow$ \hfill
\begin{minipage}[t]{.165\linewidth}\centering
\hostpic{b/root,e/v,a/arrival}{a/b}{}
\par\smallskip\footnotesize
\textbf{Stage 2}: $a$ arrives.\\[2pt]
Count $a$.
\end{minipage}%
\hfill $\rightarrow$ \hfill
\begin{minipage}[t]{.165\linewidth}\centering
\hostpic{b/root,e/v,a/v,d/arrival}{a/b,a/d}{}
\par\smallskip\footnotesize
\textbf{Stage 3}: $d$ arrives.\\[2pt]
Not a neighbor.
\end{minipage}%
\hfill $\rightarrow$ \hfill
\begin{minipage}[t]{.165\linewidth}\centering
\hostpic{b/root,e/v,a/v,d/v,c/arrival}{a/b,b/c,a/d}{}
\par\smallskip\footnotesize
\textbf{Stage 4}: $c$ arrives.\\[2pt]
Count $c$; no copy.
\end{minipage}%

\captionof{figure}{Exhaust the ordering: all of $G$ is revealed, but no $P_4$ has an endpoint at $b$.}\label{fig:exhaust-p4-endpoint}
\end{pictureblock}
\[
\text{In this example, } b_T(\pi)=2=\deg_G(b)\qquad\text{(both $a$ and $c$ are early).}
\]
\end{minipage}\par

\Needspace{6\baselineskip}
For use in the proof, we can also recognize an early neighbor directly from the revealed graph. A neighbor $y=v_j$ of $x=v_0$ is early exactly when $\Gcut{\pi}{j}$ does not contain $T$ rooted at $x$. This graph includes $y$ itself. Once a rooted copy appears, it remains present in every larger revealed graph.

In the proof, an \emph{early-neighbor occurrence} $(\pi,y)$ records a full ordering $\pi$ and one neighbor $y$ counted by $b_T(\pi)$. Thus $D(T)$ counts these occurrences, not copies of the tree. Different full orderings count separately, even if they reveal the same vertices before stopping. We retain the unrevealed tail so that the rearrangements in the proof can be undone. For any rooted target $U$, write $\mathcal E(U)$ for its set of early-neighbor occurrences; thus $|\mathcal E(U)|=D(U)$. In an occurrence $(\pi,y)$, we call $y$ the \emph{chosen early neighbor}, always specifying the target when needed.

\Needspace{6\baselineskip}
\section{Induction on the Rooted Tree}
The quantity $D(T)$ counts how many neighbor arrivals, in total over all orderings, occur before a rooted copy appears. Our aim is to bound this total using only $t$ and $n$.
\Needspace{10\baselineskip}
\begin{proposition}\label{prop:count}
Let $G$ be a graph with $n\ge2$ vertices, and let $T$ be a rooted tree on $t\ge2$ vertices. Then
\begin{equation}
D(T)=\sum_{\pi}b_T(\pi)\le(t-2)n!.
\label{eq:main}
\end{equation}
Equivalently, the stopping procedure counts at most $t-2$ early neighbors on average over all orderings, including those in which no rooted copy ever appears.
\end{proposition}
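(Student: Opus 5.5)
We argue by induction on $t$, proving \eqref{eq:main} simultaneously for every choice of root. For the base case $t=2$, $T$ is a single edge rooted at an endpoint. The first neighbor of $v_0$ to arrive already completes a rooted copy, so it is not early. Hence $b_T(\pi)=0$ for every $\pi$ and $D(T)=0=(t-2)n!$. For the inductive step we decompose $T$ at its root into two smaller rooted trees. We then show that every early-neighbor occurrence $(\pi,y)\in\mathcal E(T)$ is either an occurrence for one of the smaller targets, or can be charged, by a reversible swap, to an occurrence for a smaller target or to a budget of $n!$ extra occurrences.

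\textbf{Case 1: the root has degree at least two.} Let $r$ have a child $c$. Split $T$ into two rooted trees sharing $r$. The first, $T_1$, is $r$ together with all branches at $r$ except the one through $c$. The second, $T_2$, is $r$ together with the branch through $c$. Both are rooted at $r$, both have at least two vertices, and $t_1+t_2=t+1$. Hence the target inequality reads
\[
D(T)\le D(T_1)+D(T_2)+n!,
\]
since $(t_1-2)+(t_2-2)+1=t-2$. To prove it, take $(\pi,y)\in\mathcal E(T)$ with $y=v_j$ and $x=v_0$.
\begin{itemize}
\item If $\Gcut{\pi}{j}$ lacks $T_1$ rooted at $x$, then $(\pi,y)\in\mathcal E(T_1)$.
\item If it lacks $T_2$ rooted at $x$, then $(\pi,y)\in\mathcal E(T_2)$.
\item Otherwise both rooted copies are present in $\Gcut{\pi}{j}$, but they cannot be chosen vertex-disjoint away from $x$.
\end{itemize}
The third case must be controlled by an exchange argument. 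Swap $x$ and $y$ in $\pi$ to obtain $\pi'$, which starts at $y$ and has $x$ at position $j$. The aim is to show that $(\pi',x)$ is an early-neighbor occurrence for a suitably smaller rooted tree, or else that each ordering contributes at most one such bad occurrence (for instance, the last neighbor arriving before the stopping time). The swap is an involution on pairs (ordering, chosen neighbor), because we retain the unrevealed tail. This is what makes the charging injective.

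\textbf{Case 2: the root has degree one.} Let $c$ be the unique child of $r$, and let $T'=T-r$, rooted at $c$, with $t-1$ vertices. We want $D(T)\le D(T')+n!$. For $(\pi,y)\in\mathcal E(T)$ we swap $x$ and $y$. A copy of $T'$ rooted at $y$ inside $\Gcut{\pi}{j}$ that avoids $x$ would extend, via the edge $xy$, to a copy of $T$ rooted at $x$. So any failure to reach $T$ translates into $T'$ being absent at $y$'s side, except when the copy of $T'$ uses $x$. That exceptional case is again charged to the $n!$ budget.

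The main obstacle is this exceptional charging, in both cases: rooted copies that overlap, or that use the original first vertex. The crux is the precise choice of which neighbor to swap and the check that the map is injective with at most one exceptional occurrence per ordering. I would first try the rule \emph{charge the last early neighbor of $\pi$}, and verify injectivity by reconstructing $\pi$ from $\pi'$ through the reverse swap.
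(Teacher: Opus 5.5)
Your proposal has a genuine gap, mainly in the case where the root has degree at least two.

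\textbf{The branching case.} You partition occurrences by what is present at stage $j$. This leaves a third class: $T_1$ and $T_2$ rooted at $x$ are both present in $\Gcut{\pi}{j}$, but only overlapping. That class can contain many occurrences from a single ordering, so neither ``at most one bad occurrence per ordering'' nor ``charge the last early neighbor'' can bound it. For example, let $G$ be a star with center $x$ and leaves $\ell_1,\dots,\ell_m$, plus the edge $\ell_1\ell_2$. Let $T=P_5$ rooted at its middle, so your split gives $T_1=T_2=P_3$ rooted at an endpoint. No copy of $T$ is ever rooted at $x$. In the ordering $\pi=(x,\ell_1,\ell_2,\dots,\ell_m)$, the pairs $(\pi,\ell_i)$ with $2\le i\le m$ all lie in your third class. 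Swapping $x$ with $y$ does not help either: both $T_1$ and $T_2$ must stay rooted at the vertex representing $r$, and you name no smaller target rooted at $y$ for which $(\pi',x)$ would be early.

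The missing idea is to classify $(\pi,y)$ by \emph{when} $T_1$ rooted at $x$ first appears.
\begin{enumerate}
\item If $T_1$ has not appeared by stage $j$, then $(\pi,y)\in\mathcal E(T_1)$, as in your first bullet.
\item If it first appears exactly at stage $j$, then $y$ is the stopping neighbor for $T_1$. So each ordering contributes at most one such occurrence, giving at most $n!$.
\item If it first appears at a stage $k<j$, write $\pi=(x,R,X,Y)$ with $R=(v_1,\dots,v_k)$ and $X=(v_{k+1},\dots,v_j)$. Then $G[\{x\}\cup V(X)]$ has no $T_2$ rooted at $x$: such a copy would meet the copy of $T_1$ in $G[\{x\}\cup V(R)]$ only at $x$, yielding $T$ in $\Gcut{\pi}{j}$. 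So the block swap to $(x,X,R,Y)$, which keeps $x$ first, makes $y$ early for $T_2$. It is reversible: $X$ is the part of the new ordering after $x$ through $y$, and $R$ is the shortest initial segment of the part after $y$ such that $G[\{x\}\cup V(R)]$ contains $T_1$ rooted at $x$.
\end{enumerate}
This whole third class must go through the block swap. Your second bullet sends some of the same occurrences into $\mathcal E(T_2)$ unchanged, and its images can coincide with block-swap images, so it should be dropped.

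\textbf{The leaf-root case.} Here swapping $x$ and $y$ is the right operation. Occurrences in which $\Gcut{\pi}{j}$ has no $T'$ rooted at $y$ map injectively into $\mathcal E(T')$ via $(\pi,y)\mapsto(\pi',x)$, since $x$ arrives in $\pi'$ with exactly the same vertices revealed. But the exceptional occurrences are not at most one per original ordering. Take host edges $ab,bc,ad,af$, the ordering $(a,b,c,d,f)$, and $T=P_4$ rooted at an endpoint. Both $d$ and $f$ are early for $T$, and each sees a copy of $T'=P_3$ rooted at itself through $a$.

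What works is to map an exceptional $(\pi,y)$ to the swapped ordering $\pi'$ alone. Every copy of $T'$ rooted at $y$ in $\Gcut{\pi}{j}$ uses $x$. So the graph revealed just before $x$ arrives in $\pi'$, namely $\Gcut{\pi}{j}-x$, has none, while the graph revealed at $x$'s arrival has one. Thus $x$ is the unique vertex at whose arrival $T'$ rooted at $y$ first appears in $\pi'$. Hence $\pi'$ determines $x$, swapping back recovers $(\pi,y)$, and there are at most $n!$ exceptional occurrences.
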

\begin{proof}
We induct on $t$, proving the assertion for every rooted tree at each size, including every choice of root.

For the base case, let $t=2$ and $\pi$ be any ordering of $G$'s vertices. Since $t=2$, the tree is a single edge. If $v_0$ has at least one neighbor, then the first neighbor of $v_0$ to arrive immediately completes this rooted tree, so we stop without counting that neighbor. Thus, we have $b_T(\pi) = 0$. Otherwise, $v_0$ is an isolated vertex and $G$ contains no copy of this rooted edge at $v_0$, so in such a circumstance we will exhaust the ordering with no neighbors to count. And so, again, we have $b_T(\pi)=0$. Therefore, $D(T)=0$, which satisfies $D(T)=\sum_{\pi}b_T(\pi)\le(t-2)n!$, as desired.

Now let $t\ge3$, and assume the proposition holds for all smaller rooted trees with at least two vertices. Because trees are connected, the root $r$ of $T$ has positive degree. There are two cases to consider.

\subsection*{Case 1: The Root Is a Leaf}
Let $s$ be the unique neighbor of $r$. Remove $r$ to obtain a tree $T'$ on $t-1$ vertices, rooted at $s$.  By the inductive hypothesis, $D(T') \leq (t-3)n!$.
\begin{pictureblock}
\panel{\begin{tikzpicture}
\node[root](r)at(0,0){$r$};
\node[v](s)at(1.2,0){$s$};
\node[v](u)at(2.4,0){$u$};
\node[v](w)at(3.6,0){$w$};
\draw[accent,thick](r)--(s);
\draw[chosen](s)--(u)--(w);
\end{tikzpicture}}{$T$: remove the leaf root and its edge.}
\hfill
\panel{\begin{tikzpicture}
\node[root](s)at(0,0){$s$};
\node[v](u)at(1.2,0){$u$};
\node[v](w)at(2.4,0){$w$};
\draw[chosen](s)--(u)--(w);
\end{tikzpicture}}{$T'$: its root is the former neighbor $s$.}
\captionof{figure}{For example, deleting the endpoint root of $P_4$
gives $P_3$ rooted at an endpoint. The roots of $T$ and $T'$ are
different tree vertices.}\label{fig:leaf-trees}
\end{pictureblock}

Both $D(T)$ and $D(T')$ count early-neighbor occurrences with respect to orderings of the \emph{vertices} of $G$. Our goal is to prove
\[
D(T)\le D(T')+n!.
\]
We will compare the two collections of occurrences, but first let us explain why the comparison involves looking for $T'$ at a neighbor of the first vertex.

\paragraph{Which Vertices Play the Roles of $r$ and $s$?}
Take an early-neighbor occurrence $(\pi,y)$ for $T$. Let $x$ be the first vertex of $\pi$, let $j$ be the position of $y$, and put
\[
H=\Gcut{\pi}{j}.
\]
Thus $xy$ is an edge, and $H$ contains no $T$ rooted at $x$.

The vertices $r,s$ belong to the abstract target tree, whereas $x,y$ belong to $G$. In a copy of $T$ rooted at $x$, the vertex $x$ would represent $r$. Since $r$ has just one neighbor, $s$, we can try using the known edge $xy$ to represent $rs$. This makes $y$ the natural candidate to represent $s$, the root of $T'$.

That is why we ask whether $H$ contains $T'$ \emph{rooted at $y$}. We are choosing where to look, not asserting that a copy already exists. Also, it is $r$ that is a leaf of $T$; we do not assume that $x$ has degree one in $G$. A copy may leave other edges at $x$ unused.

\paragraph{Why Finding $T'$ Need Not Already Give $T$.}
Since we are assuming that $H$ does not have a $T$ rooted at $x$, one may think that it is impossible to have a $T'$ rooted at $y$, because if it did, couldn't we simply add $xy$ to such a copy and obtain the forbidden $T$ rooted at $x$?

We could, \emph{provided that the chosen copy of $T'$ does not already use $x$}. Then $x$ is a new vertex, and attaching it at $y$ makes it the required leaf root. But if the copy already uses $x$, we cannot use $x$ again as that additional leaf. The essential obstruction is the repeated vertex, whether or not the edge $xy$ is already part of the copy.

Consequently, because $H$ contains no $T$ rooted at $x$,
\[
\boxed{\text{Every copy of $T'$ rooted at $y$ in $H$, if any, must use $x$.}}
\]
There may be no such copy at all, or there may be copies that all use $x$. Both situations are possible, and so we will partition the early neighbor occurrences $(\pi,y)$ for $T$ into two sets based on whether or not $H$ has a $T'$ rooted at $y$.

\paragraph{Two Examples Before We Partition the Count.}
Use the paths from Figure~\ref{fig:leaf-trees}: $T=P_4$ and $T'=P_3$, both rooted at endpoints. Take our usual host graph with edges $ab,bc,ad$ and isolated vertex $e$, but use the ordering
\[
\pi=(a,b,c,d,e),\qquad x=a.
\]

First, let's look at the stage when the early neighbor $b$ arrives. Here $H$ is just the edge $ab$. It contains neither $T$ rooted at $a$ nor $T'$ rooted at $b$. Thus $b$ is early for $T$, and the smaller rooted tree is absent too.

Next, in the same ordering, let's look at the stage when the early neighbor $d$ arrives. Now $H$ is the path $d-a-b-c$. It contains no endpoint-rooted $P_4$ at $a$, since $a$ is internal in that path. But it does contain the endpoint-rooted $P_3$ at $d$ given by $d-a-b$. This copy already uses $x=a$, so we cannot attach $a$ as a new leaf. Thus $d$ is early for $T$ even though $T'$ rooted at $d$ is present.
\begin{pictureblock}
\panel{\hostpic{a/root,b/arrival}{a/b}{}}
{$y=b$: $H=G[\{a,b\}]$.\\No $T'$ is rooted at $b$.}
\hfill
\panel{\hostpic{a/root,b/v,c/v,d/arrival}{a/b,b/c,a/d}{a/d,a/b}}
{$y=d$: $H=G[\{a,b,c,d\}]$.\\The rooted $T'$ is $d-a-b$; it uses $a$.}
\captionof{figure}{Two early-neighbor occurrences for $T$. Blue shading identifies the first vertex $x=a$, and orange identifies the chosen early neighbor $y$. In the right panel the highlighted copy of $T'$ is rooted at $d$, not at $a$. All vertices keep their positions from the host drawing.}\label{fig:leaf-preview}
\end{pictureblock}

\paragraph{Divide Both Counts into Two Parts.}
We now give names to the two situations just illustrated and make the corresponding division for $T'$.

On the $D(T)$ side, use the notation above: $(\pi,y)$ is an early occurrence, $x$ is first, $y$ is in position $j$, and $H=\Gcut{\pi}{j}$.

Independently, on the $D(T')$ side, consider an early occurrence $(\sigma,x)$. Here $y$ is the first vertex, $x$ is its chosen early neighbor in some position $k$, and
\[
K=\Gcut{\sigma}{k}.
\]
The letters on this side anticipate the exchange of roles we will use below. At this point $\pi$ and $\sigma$ are arbitrary orderings on their respective sides; we have not yet paired them, and we do not yet assume $H=K$.

\begin{pictureblock}
\small
\begin{tabular}{@{}p{.47\linewidth}p{.47\linewidth}@{}}
\toprule
Occurrences $(\pi,y)$ counted by $D(T)$
& Occurrences $(\sigma,x)$ counted by $D(T')$\\
\midrule
First vertex: $x$; chosen early neighbor: $y$.\newline
$xy\in E(G)$ and $H=\Gcut{\pi}{j}$.\newline
Always: $H$ contains no $T$ rooted at $x$.
&
First vertex: $y$; chosen early neighbor: $x$.\newline
$xy\in E(G)$ and $K=\Gcut{\sigma}{k}$.\newline
Always: $K$ contains no $T'$ rooted at $y$.\\
\midrule
$A$: $H$ contains no $T'$ rooted at $y$.
& $A'$: $K$ contains no $T$ rooted at $x$.\\[6pt]
$B$: $H$ contains $T'$ rooted at $y$.
& $B'$: $K$ contains $T$ rooted at $x$.\\
\bottomrule
\end{tabular}
\captionof{table}{Two independently defined partitions. On the left we seek $T$ at $x$ and additionally test for $T'$ at $y$; on the right we seek $T'$ at $y$ and additionally test for $T$ at $x$.}\label{tab:leaf-partitions}
\end{pictureblock}

Our first example, $(\pi,b)$, belongs to $A$; our second, $(\pi,d)$, belongs to $B$. In general,
\[
D(T)=|A|+|B|,
\qquad
D(T')=|A'|+|B'|.
\]
Here is the plan: construct a \emph{bijection between $A$ and $A'$}, and prove that $|B|\le n!$. The unused occurrences in $B'$ only make $D(T')$ larger.

\paragraph{The Swap Operation.}
For an occurrence $(\pi,y)$ with first vertex $x$, write $\pi=(x,L,y,R)$, where $L$ lists the vertices between $x$ and $y$ and $R$ lists those after $y$. Define the \emph{swapped ordering}
\begin{equation}
\pi=(x,L,y,R)\quad\longmapsto\quad
\sigma=\pi^*=(y,L,x,R).
\label{eq:leafmap}
\end{equation}
Only $x$ and $y$ exchange positions; the lists $L,R$ are unchanged and either may be empty. The notation $\pi^*$ refers to this particular choice of $y$: different early neighbors in the same ordering give different swaps. Swapping $x,y$ again recovers $\pi$.

\paragraph{Match $A$ with $A'$.}
At the arrival of $y$ in $\pi$ and of $x$ in $\sigma$, we have revealed exactly the same vertices, namely $x,y$ and those in $L$. Thus the induced graphs $H,K$ in Table~\ref{tab:leaf-partitions} are equal. Membership in either $A$ or $A'$ asks for exactly the same two absences in that graph: no $T$ rooted at $x$ and no $T'$ rooted at $y$. Since $xy$ remains an edge, the swap therefore pairs
\[
(\pi,y)\in A\quad\longleftrightarrow\quad(\sigma,x)\in A'.
\]
Swapping back reverses the pairing, so it is a bijection and $|A|=|A'|$.

\emph{Return to the first example.}
Swapping $a$ with the chosen early neighbor $b$ changes $(a,b,c,d,e)$ to $(b,a,c,d,e)$. The revealed graph is still just the edge $ab$; the first vertex and chosen early neighbor exchange roles.
\begin{pictureblock}
\panel{\hostpic{a/root,b/arrival}{a/b}{}}
{$((a,b,c,d,e),b)\in A$.\\First vertex $a$; $b$ is early for $T$.}
\hfill
\panel{\hostpic{a/arrival,b/root}{a/b}{}}
{$((b,a,c,d,e),a)\in A'$.\\First vertex $b$; $a$ is early for $T'$.}
\captionof{figure}{The first example after applying the swap. The graph and vertex positions stay fixed; the roles, shown by the shading, change.}\label{fig:leaf-early}
\end{pictureblock}

\Needspace{6\baselineskip}
\paragraph{Bound the Remaining Group $B$.}
Now take $(\pi,y)\in B$, again writing $\pi=(x,L,y,R)$ and $H=\Gcut{\pi}{j}$. The graph $H$ contains no $T$ rooted at $x$, but does contain $T'$ rooted at $y$. As we established before the partition, every such copy uses $x$. Thus $H-x$, the induced graph obtained by removing $x$, contains no $T'$ rooted at $y$.

Apply the same swap, giving $\sigma=\pi^*=(y,L,x,R)$. Immediately before $x$ arrives, the revealed graph is exactly $H-x$; when $x$ arrives, it becomes $H$. Therefore $x$ is the vertex whose arrival first produces $T'$ rooted at $y$.

Since $xy$ is an edge, we can call $x$ the \emph{stopping neighbor} for $T'$ in $\sigma$. It is not early for $T'$, so $(\sigma,x)$ belongs to neither $A'$ nor $B'$. We will instead count the swapped orderings themselves.

\emph{Return to the second example.}
Swapping $x=a$ with $y=d$ gives
\[
(a,b,c,d,e)\quad\longmapsto\quad(d,b,c,a,e).
\]
The intervening list $(b,c)$ keeps its order. Before $a$ arrives in the new ordering, the edge $bc$ is present but the first vertex $d$ is isolated. When $a$ arrives, the rooted path $d-a-b$ appears and the run for $T'$ stops.
\begin{pictureblock}
\panel{\hostpic{b/v,c/v,d/root}{b/c}{}}
{Before $a$ arrives in $(d,b,c,a,e)$.\\The root $d$ is isolated.}
\hfill
\panel{\hostpic{a/arrival,b/v,c/v,d/root}{a/b,b/c,a/d}{a/d,a/b}}
{After $a$ arrives.\\The first rooted $T'$ appears; $a$ is not early.}
\captionof{figure}{The second example after applying the swap. The selected $T'$ is rooted at $d$ and uses the newly arrived vertex $a$. The gray edge $bc$ is retained but unused by this copy.}\label{fig:leaf-stop}
\end{pictureblock}

\emph{Count the swapped orderings.}
Let $B^*$ be the set of swapped orderings obtained from occurrences in $B$. These form a subset of $\mathcal O$, the set of all vertex orderings: in each one, the first copy of $T'$ rooted at the first vertex appears at a neighbor of that vertex.

Different occurrences in $B$ give different members of $B^*$. Indeed, from a swapped ordering $\sigma$ we know its first vertex $y$, and we recover $x$ as the unique vertex whose arrival first produces $T'$ rooted at $y$. Swapping $x,y$ back recovers the entire original occurrence $(\pi,y)$. Hence
\[
|B|=|B^*|\le|\mathcal O|=n!.
\]
The key is the \emph{first} appearance of $T'$: a vertex arriving later would not uniquely identify the swap. This count concerns the resulting orderings, not the original orderings.

\paragraph{Compare the Two Totals.}
Since $A'$ is a subset of the occurrences counted by $D(T')$, our two comparisons give
\begin{equation}
D(T)=|A|+|B|=|A'|+|B|
\le D(T')+n!.
\label{eq:leafD}
\end{equation}
By the inductive hypothesis, $D(T')\le(t-3)n!$, so
\[
D(T)\le(t-3)n!+n!=(t-2)n!.
\]
This completes the leaf case.

\Needspace{6\baselineskip}
\subsection*{Case 2: The Root Has Degree at Least Two}
Remove $r$ temporarily. Partition the remaining components into two nonempty groups, and adjoin $r$ to each group, together with its edges to the vertices in that group. This produces rooted trees $T_1,T_2$, both rooted at $r$, such that
\[
T_1\cup T_2=T,\qquad V(T_1)\cap V(T_2)=\{r\}.
\]
Writing $t_i=|V(T_i)|$, we have $2\le t_i<t$ and $t_1+t_2=t+1$, since the root is counted twice.

Here $r$ is a vertex of the target trees, while $x$ will denote the first vertex of an ordering of $G$. We seek both smaller trees rooted at $x$, so $x$ represents their common root $r$. Unlike Case 1, the rearrangement in this case will keep $x$ first.
\begin{pictureblock}
\threepanel{\begin{tikzpicture}
\path[use as bounding box] (-1.9,-.4) rectangle (1.9,1.1);
\node[root](r)at(0,0){$r$};\node[v](a)at(-.8,.7){$s_1$};\node[v](b)at(-1.6,.7){$u_1$};\node[v](c)at(.8,.7){$s_2$};\node[v](d)at(1.6,.7){$u_2$};
\draw[chosen](r)--(a)--(b);\draw[accent,thick](r)--(c)--(d);
\end{tikzpicture}}{$T$: both groups of branches.}
\hfill
\threepanel{\begin{tikzpicture}
\path[use as bounding box] (-1.9,-.4) rectangle (1.9,1.1);
\node[root](r)at(0,0){$r$};\node[v](a)at(-.8,.7){$s_1$};\node[v](b)at(-1.6,.7){$u_1$};\draw[chosen](r)--(a)--(b);\end{tikzpicture}}{$T_1$: one group and the root.}
\hfill
\threepanel{\begin{tikzpicture}
\path[use as bounding box] (-1.9,-.4) rectangle (1.9,1.1);
\node[root](r)at(0,0){$r$};\node[v](c)at(.8,.7){$s_2$};\node[v](d)at(1.6,.7){$u_2$};\draw[accent,thick](r)--(c)--(d);\end{tikzpicture}}{$T_2$: the other group and root.}
\captionof{figure}{Splitting $T$ into two smaller rooted trees. Copies of $T_1,T_2$ combine into $T$ if they share only their root. Merely finding both copies is insufficient if other vertices overlap.}\label{fig:branch-trees}
\end{pictureblock}

\paragraph{Divide the Count into Three Parts.}
Consider an early-neighbor occurrence $(\pi,y)$ for $T$, with first vertex $x$ and chosen early neighbor $y=v_j$. Put $H=\Gcut{\pi}{j}$. Thus $xy\in E(G)$, and $H$ contains no $T$ rooted at $x$. Partition all these occurrences into three sets according to when the smaller tree $T_1$ first appears, always rooted at $x$:
\begin{pictureblock}
\small
\begin{tabular}{@{}c p{.59\linewidth}p{.25\linewidth}@{}}
\toprule
Set & Membership condition & Comparison to prove\\
\midrule
$S_1$ & $H$ contains no $T_1$ rooted at $x$. & $|S_1|=D(T_1)$\\[5pt]
$S_2$ & $T_1$ rooted at $x$ first appeared at a stage $k<j$. & $|S_2|\le D(T_2)$\\[5pt]
$S_0$ & $T_1$ rooted at $x$ first appears at stage $j$, when $y$ arrives. & $|S_0|\le n!$\\
\bottomrule
\end{tabular}
\captionof{table}{The three parts of the count in Case 2. Every member of every set is an early-neighbor occurrence for $T$; the smaller tree $T_1$ determines which part it belongs to.}\label{tab:branch-partitions}
\end{pictureblock}
These sets are disjoint and exhaust the occurrences counted by $D(T)$, so
\[
D(T)=|S_1|+|S_2|+|S_0|.
\]
Here is the plan. The set $S_1$ will turn out to be exactly the set of early-neighbor occurrences for $T_1$. We will pair each member of $S_2$ with a distinct early-neighbor occurrence for $T_2$; some occurrences for $T_2$ may remain unused. Finally, at most one member of $S_0$ can come from each original ordering. Adding these three comparisons will give the desired bound.

For the examples in this case, take the $T=P_5$ pictured in Figure~\ref{fig:branch-trees}, rooted at its middle vertex, with $T_1$ and $T_2$ both copies of $P_3$ rooted at an endpoint. Use our familiar host graph with edges $ab,bc,ad$ and isolated vertex $e$.

\paragraph{Identify $S_1$ with All Early Occurrences for $T_1$.}
Membership in $S_1$ asks for two absences in $H$: no $T$ rooted at $x$ and no $T_1$ rooted at $x$. But the second already implies the first, since every rooted copy of $T$ contains a rooted copy of $T_1$. Thus the extra requirement adds nothing:
\[
S_1=\mathcal E(T_1),
\qquad\text{so}\qquad |S_1|=D(T_1).
\]
These are exactly the same ordering--neighbor pairs; no rearrangement is needed.

\emph{Example.} Choose $\pi=(a,e,b,d,c)$ and the early neighbor $y=d$. At its arrival, the revealed graph contains the path $b-a-d$ and the isolated vertex $e$. A three-vertex path is present, but $a$ is its middle vertex, not an endpoint. Thus $T_1=P_3$ rooted at an endpoint is still absent at $a$, and so is $T=P_5$ rooted at its middle. The same occurrence $((a,e,b,d,c),d)$ belongs to both $S_1$ and $\mathcal E(T_1)$.
\begin{pictureblock}
\hostpic{a/root,b/v,d/arrival,e/v}{a/b,a/d}{}
\captionof{figure}{When $d$ arrives in $(a,e,b,d,c)$, the path $b-a-d$ has the wrong root for $T_1$. Therefore $d$ is early for both $T_1$ and $T$. The vertex $c$ is still unrevealed.}\label{fig:branch-early}
\end{pictureblock}

\Needspace{6\baselineskip}
\paragraph{Match $S_2$ with Some Early Occurrences for $T_2$.}
For an occurrence in $S_2$, write $y=v_j$ and let $k$ be the first stage at which $T_1$ appeared rooted at $x$. Then $1\le k<j$. We check every arrival when choosing $k$, including arrivals of nonneighbors of $x$.

Divide the ordering into blocks:
\[
\pi=\bigl(x,\underbrace{v_1,\ldots,v_k}_{R},
\underbrace{v_{k+1},\ldots,v_j}_{X},
\underbrace{v_{j+1},\ldots,v_{n-1}}_{Y}\bigr).
\]
The block $R$ ends at the first appearance of $T_1$, and the next block $X$ ends at the chosen early neighbor $y$. Both are nonempty; the trailing list $Y$ may be empty. Write $V(R)$ and $V(X)$ for the vertex sets of these lists.

The graph $G[\{x\}\cup V(R)]$ contains $T_1$ rooted at $x$. The graph $G[\{x\}\cup V(X)]$ cannot contain $T_2$ rooted there. If it did, these copies of $T_1$ and $T_2$ would share only $x$, and so combining their selected edges would give a rooted copy of $T$ in $\Gcut{\pi}{j}$. This would contradict $y$ being early for $T$.

We can therefore obtain an early-neighbor occurrence for $T_2$ by moving $R$ behind $y$. Swap the blocks $R,X$, preserving the order within each:
\begin{equation}
\pi=(x,R,X,Y)\quad\longmapsto\quad
\sigma=(x,X,R,Y).
\label{eq:splitmap}
\end{equation}
Keep the same chosen neighbor $y$. In $\sigma$, the graph revealed at its arrival is exactly
\[
\Gcut{\sigma}{|X|}=G[\{x\}\cup V(X)].
\]
We just proved that this graph does not contain $T_2$ rooted at $x$. Also, $xy$ is still an edge. Thus the run for $T_2$ has not stopped, and $y$ is early for $T_2$ in $\sigma$.

\emph{Example.} Choose $\pi=(a,b,c,e,d)$ and the early neighbor $y=d$. It is early for $T=P_5$, since the isolated vertex $e$ prevents the graph from containing any five-vertex tree. The smaller tree $T_1$ first appears when $c$ arrives, using the path $a-b-c$. This is strictly before $d$ arrives. Notice that $c$ is not a neighbor of $a$. We have
\[
x=a,\qquad R=(b,c),\qquad X=(e,d),\qquad Y=\varnothing,
\]
so the block swap gives
\[
\pi=(a,\underbrace{b,c}_{R},\underbrace{e,d}_{X})
\quad\longmapsto\quad
\sigma=(a,\underbrace{e,d}_{X},\underbrace{b,c}_{R}).
\]
At $d$'s arrival in the new ordering $\sigma$, only $a,e,d$ have been revealed. Their induced graph has just the edge $ad$, so $d$ is early for $T_2=P_3$ rooted at an endpoint.
\begin{pictureblock}
\panel{\hostpic{a/root,b/v,c/v,d/arrival,e/v}{a/b,b/c,a/d}{a/b,b/c}}
{Before the block swap: $\Gcut{\pi}{4}$.\\$T_1$ has appeared, but $T$ is still absent.}
\hfill
\panel{\hostpic{a/root,d/arrival,e/v}{a/d}{}}
{After the block swap: $\Gcut{\sigma}{2}$.\\$T_2$ is still absent; $b,c$ are unrevealed.}
\captionof{figure}{Moving $R=(b,c)$ after the chosen early neighbor $d$ changes the graph revealed at its arrival, not the positions of the vertices in the drawing. The edge $ad$ remains, so $d$ is still a neighbor of $a$. All edges between revealed vertices are shown.}\label{fig:branch-move}
\end{pictureblock}

\emph{Count the rearranged occurrences.}
Let $S_2^*$ be the set of pairs $(\sigma,y)$ obtained by these block swaps. Keep the chosen early neighbor in each pair, since $D(T_2)$ counts occurrences, not just orderings. We have shown that
\[
S_2^*\subseteq\mathcal E(T_2).
\]
\Needspace{10\baselineskip}
There is also a useful way to recognize which members of $\mathcal E(T_2)$ lie in this subset. Given $(\sigma,y)$, let $x$ be first and let $X$ be the list after $x$ through $y$. Starting again with $x$, read the vertices \emph{after $y$}, ignoring all of $X$. Require that:
\begin{enumerate}[label=(\roman*),leftmargin=2em]
\item this separate reveal process reaches a first copy of $T_1$ rooted at $x$; call the list read up to that point $R$;
\item the combined graph $G[\{x\}\cup V(R)\cup V(X)]$ still contains no $T$ rooted at $x$.
\end{enumerate}
Both conditions hold for every pair produced by our block swap. Conversely, they let us uniquely reverse the swap: $(\sigma,y)$ determines $X$, the first appearance of $T_1$ in the separate search determines $R$, and the remainder is $Y$. Moving $R$ before $X$ recovers
\[
(\pi,y)=((x,R,X,Y),y).
\]
This occurrence belongs to $S_2$: $T_1$ appears at the end of $R$, strictly before $y$, while $T$ is still absent through $y$. Thus the two conditions describe exactly $S_2^*$, and every pair in it has exactly one original occurrence.
In the example, $\sigma=(a,e,d,b,c)$ and $y=d$ give $X=(e,d)$. Ignoring these vertices, the search from $a$ finds $T_1$ when $c$ arrives, recovering $R=(b,c)$. Moving this block back gives $\pi=(a,b,c,e,d)$.

The block swap is therefore a bijection from $S_2$ to the subset $S_2^*$ of $\mathcal E(T_2)$. Consequently,
\[
|S_2|=|S_2^*|\le|\mathcal E(T_2)|=D(T_2).
\]
\Needspace{6\baselineskip}
\paragraph{Bound the Remaining Group $S_0$.}
For an occurrence in $S_0$, the arrival of $y$ stops the run for the smaller target $T_1$, although $y$ is still early for $T$.

\emph{Example.} Use the same graph and rooted trees, but choose $\pi=(a,c,b,e,d)$ and the early neighbor $y=b$. Before $b$ arrives, only $a,c$ have been revealed, and they are not adjacent. The arrival of $b$ creates the rooted path $a-b-c$, which is the first copy of $T_1$ rooted at $a$. There are still only three revealed vertices, so the five-vertex tree $T$ is absent and $b$ is early for $T$.
\begin{pictureblock}
\panel{\hostpic{a/root,c/v}{}{}}
{Before $b$ arrives in $(a,c,b,e,d)$.\\The rooted $T_1$ is absent.}
\hfill
\panel{\hostpic{a/root,b/arrival,c/v}{a/b,b/c}{a/b,b/c}}
{After $b$ arrives.\\The run for $T_1$ stops; the run for $T$ continues.}
\captionof{figure}{The arrival of the chosen early neighbor for $T$ first produces the smaller tree $T_1$ rooted at $a$. It does not produce $T$, so this occurrence is still counted in $D(T)$.}\label{fig:branch-stop}
\end{pictureblock}

\emph{Count the corresponding orderings.}
Let $S_0^*$ be the set of full vertex orderings with the following property: the first copy of $T_1$ rooted at the first vertex $x$ appears at a neighbor $y$ of $x$, and the graph revealed at that stage still contains no $T$ rooted at $x$.

Forgetting the chosen neighbor sends each $(\pi,y)\in S_0$ to an ordering $\pi\in S_0^*$. Conversely, an ordering in $S_0^*$ uniquely identifies $y$ as its stopping neighbor for $T_1$, recovering the occurrence $(\pi,y)\in S_0$. This is a bijection. Since $S_0^*$ is a subset of the set $\mathcal O$ of all orderings,
\[
|S_0|=|S_0^*|\le|\mathcal O|=n!.
\]
Unlike the bound for $B$, this bound uses the original ordering: no swap is needed.

\Needspace{12\baselineskip}
\paragraph{Add the Three Counts.}
We have now established all three comparisons in Table~\ref{tab:branch-partitions}. Adding the contributions from the three disjoint sets gives
\begin{equation}
D(T)=|S_1|+|S_2|+|S_0|\le D(T_1)+D(T_2)+n!.
\label{eq:splitD}
\end{equation}
Apply the inductive hypothesis to $T_1,T_2$ and use $t_1+t_2=t+1$:
\[
D(T)\le\bigl((t_1-2)+(t_2-2)+1\bigr)n!=(t-2)n!.
\]
This completes the second case.

The two cases exhaust the possible degrees of the root, completing the induction.
\end{proof}

\Needspace{24\baselineskip}
\section{Returning to the \ES\ Conjecture}
We can now see why this count answers the original question. First, recall the statement we set out to prove.
\begin{ESrestatement}
Let $n\ge t\ge2$. If $G$ is a finite simple graph on $n$ vertices with $\dd(G)>t-2$ (equivalently, $e(G)>(t-2)n/2$), then $G$ contains every tree on $t$ vertices.
\end{ESrestatement}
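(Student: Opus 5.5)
We deduce Theorem~\ref{thm:ES} from Proposition~\ref{prop:count} by double counting the early-neighbor occurrences, arguing by contradiction. Fix a tree $T$ on $t$ vertices and choose any vertex $r$ of $T$ as its root. Suppose $G$ has $\dd(G)>t-2$ but contains no copy of $T$. Then for every ordering $\pi$ with first vertex $x=v_0$, no copy of $T$ rooted at $x$ ever appears. The stopping procedure therefore exhausts the ordering, and by Definition~\ref{def:early} every neighbor of $x$ is early. This gives $b_T(\pi)=\deg_G(v_0)$ for every $\pi$. (When $n=1$ we have $t\le1$, which is excluded, so Proposition~\ref{prop:count} applies with $n\ge2$.)

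Next we evaluate $D(T)$ exactly in this situation. Group the $n!$ orderings according to their first vertex. Each vertex $v$ is first in exactly $(n-1)!$ orderings. Hence, by the handshaking identity \eqref{eq:handshake} and \eqref{eq:average},
\[
D(T)=\sum_{\pi}\deg_G(v_0)=(n-1)!\sum_{v\in V(G)}\deg_G(v)=(n-1)!\cdot n\,\dd(G)=n!\,\dd(G).
\]
Proposition~\ref{prop:count} gives $D(T)\le(t-2)n!$. Therefore $\dd(G)\le t-2$, which contradicts the hypothesis. So $G$ contains $T$, and since $T$ was arbitrary, $G$ contains every tree on $t$ vertices.

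It is worth stating the argument positively, since this is the form the probabilistic conclusion will take. Choose $\pi$ uniformly at random. Then $\E[\deg(v_0)]=\dd(G)>t-2\ge\E[b_T(\pi)]$. So some ordering has $b_T(\pi)<\deg(v_0)$, which means its run stops. The stopping neighbor witnesses a copy of $T$ rooted at $v_0$, and forgetting the root gives an unrooted copy of $T$ in $G$. There is no real obstacle here, because all the work is in Proposition~\ref{prop:count}. The only point needing care is the observation that $b_T(\pi)<\deg(v_0)$ forces the run to stop, and this follows directly from the convention that an exhausted run counts all $\deg(v_0)$ neighbors.
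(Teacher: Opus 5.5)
Your proposal is correct and matches the paper's proof: assume no copy exists, so every run exhausts and $b_T(\pi)=\deg_G(v_0)$; grouping orderings by first vertex and applying the handshaking identity then gives $D(T)=n!\,\dd(G)>(t-2)n!$, contradicting Proposition~\ref{prop:count}. Your positive restatement is the paper's direct proof in Section~\ref{sec:alternative-endings}, with one small correction: the arrival that completes the copy need not be a neighbor of $v_0$, so ``stopping neighbor'' should read ``stopping stage''; the run stopping is already enough to produce the copy.
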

\begin{proof}[Proof of Theorem~\ref{thm:ES}]
Fix any tree $T$ on $t$ vertices and choose a root. Suppose, for a contradiction, that $G$ contains no copy of its underlying unrooted tree.

Then every run exhausts the ordering without finding a rooted copy of $T$. Every neighbor of the first vertex is therefore early, so $b_T(\pi)=\deg(v_0)$ for every ordering. Since exactly $(n-1)!$ orderings begin at each vertex $x$, the handshaking identity gives
\begin{equation}
D(T)=\sum_{\pi}\deg(v_0)
=(n-1)!\sum_{x\in V(G)}\deg(x)
=(n-1)! \cdot 2e(G).
\label{eq:total}
\end{equation}
The hypothesis $e(G)>(t-2)n/2$ now implies $D(T)>(t-2)n!$, contradicting Proposition~\ref{prop:count}. Thus $G$ contains $T$. Since $T$ was arbitrary, it contains every tree on $t$ vertices.
\end{proof}
The graph's degrees may be nearly equal or extremely unequal. The proof uses their total, not their distribution. The two maps move between different orderings, which is why one ordering may have many early neighbors without violating the bound on their total.

\Needspace{6\baselineskip}
\section{Proving Theorem~\ref{thm:ES} Directly and Probabilistically}\label{sec:alternative-endings}

The same counting bound gives two other ways to finish the proof of Theorem~\ref{thm:ES}. Throughout this section, keep its hypotheses $n\ge t\ge2$ and $\dd(G)>t-2$.

\subsection*{A Direct Proof}
Fix any tree $T$ on $t$ vertices and choose a root. By
Proposition~\ref{prop:count},
\[
\sum_{\pi} b_T(\pi)=D(T)\le (t-2)n!.
\]
On the other hand, each vertex is first in exactly $(n-1)!$
orderings, so the handshaking identity gives
\[
\sum_{\pi}\deg_G(v_0)
=(n-1)!\sum_{x\in V(G)}\deg_G(x)
=n!\,\overline d(G)
>(t-2)n!.
\]
Consequently,
\[
\sum_{\pi} b_T(\pi)<\sum_{\pi}\deg_G(v_0).
\]
There must therefore be an ordering $\pi$ for which
\[
b_T(\pi)<\deg_G(v_0).
\]
In this ordering, at least one neighbor $y=v_j$ of $v_0$ is not
early. Thus $\Gcut{\pi}{j}$ contains a copy of $T$ rooted at $v_0$,
and hence $G$ contains $T$. Since $T$ was arbitrary, $G$ contains
every tree on $t$ vertices. This completes the direct proof.

\subsection*{A Probabilistic Proof}
Probability offers another succinct way to organize the same argument. Fix a rooted tree $T$ on $t$ vertices. Choose an ordering $\Pi$ uniformly at random from the $n!$ orderings and run the stopping procedure for $T$. Its count $b_T(\Pi)$ of early neighbors is now a random variable:
\[
\E[b_T(\Pi)]=\frac{D(T)}{n!}.
\]
We can reuse the two inequalities proved within the induction. Dividing \eqref{eq:leafD} and \eqref{eq:splitD} by $n!$ gives
\begin{align*}
\text{leaf root:}\qquad &\E[b_T(\Pi)]\le\E[b_{T'}(\Pi)]+1,\\
\text{split branches:}\qquad &\E[b_T(\Pi)]\le\E[b_{T_1}(\Pi)]+\E[b_{T_2}(\Pi)]+1.
\end{align*}
These inequalities were established before applying induction in their respective cases. For a two-vertex tree, there are no early neighbors. Induction therefore gives
\begin{equation}\boxed{\E[b_T(\Pi)]\le t-2.}\label{eq:expected}\end{equation}
Indeed, the leaf case gives $(t-3)+1=t-2$, and the branch case gives $(t_1-2)+(t_2-2)+1=t-2$.

Now assume $\dd(G)>t-2$ and suppose $G$ contains no copy of the underlying unrooted tree $T$. Every run exhausts all vertices, so every neighbor of the first vertex $\Pi_0$ is early. But $\Pi_0$ is uniformly distributed over $V(G)$, so
\[
\E[b_T(\Pi)]
=\E[\deg(\Pi_0)]
=\frac1n\sum_{x\in V(G)}\deg(x)
=\dd(G)>t-2,
\]
a contradiction.

No independence among arrivals is needed. Nor do we assume that either rearrangement sends a uniformly random ordering to a uniformly random ordering. The injective counting arguments, including their one-per-ordering exceptions, are what justify the two expected-value inequalities.

\clearpage
\section{How the Recent Expositions Fit Together}\label{sec:comparisons}
The newer accounts average over vertex orderings, but do not all count the same objects. For this comparison, fix a graph $G$ on $n\ge2$ vertices and a rooted tree $T$ on $t\ge2$ vertices; no density assumption is needed.

\subsection*{Wood: Counting the Complement}
Wood counts all ordering--neighbor pairs in $M(G)$, and those already containing $T$ rooted at the first vertex in $R(T,r)$ \cite{wood}. Identifying a neighbor's recorded position with the vertex itself gives
\[
D(T)=|M(G)|-|R(T,r)|.
\]
Thus his bound is Proposition~\ref{prop:count} expressed using the complementary count.

\subsection*{Riordan--Scott: Neighbors after the Tree Appears}
Riordan and Scott call $v_0v_j$ \emph{$T$-jumping} when $T$ rooted at $v_0$ is already present \emph{before} $v_j$ arrives \cite[Section~1]{riordan-scott}. Let $J_T(\pi)$ count these edges. Let $s_T(\pi)$ be $1$ if our stopping arrival is a neighbor of $v_0$, and $0$ otherwise. Separating arrivals before, at, and after the stopping stage gives
\begin{equation}
\deg_G(v_0)=b_T(\pi)+s_T(\pi)+J_T(\pi).
\label{eq:early-jumping}
\end{equation}
If no copy appears, every neighbor is early and $s_T(\pi)=J_T(\pi)=0$, so the identity still holds. Jumping edges are \emph{not quite} the complement of early neighbors: a stopping neighbor belongs to neither count. Taking expectations over a uniformly random ordering $\Pi$ gives
\[
\E[J_T(\Pi)]\ge\dd(G)-(t-2)-1=\dd(G)-t+1,
\]
the bound they prove directly. They then apply it to a tree with one leaf removed: a jumping edge attaches that missing leaf at a new vertex.

For example, take our usual $G$ and $\pi=(b,e,a,d,c)$, with $T=P_2$. The run stops at $a$, so $b_T(\pi)=0$ and $s_T(\pi)=1$. The later neighbor $c$ contributes one jumping edge. Although our procedure stops, the retained ordering still specifies these later arrivals.

\subsection*{Frederickson: Cyclic Orderings and a Specified Edge}
A cyclic ordering places the vertex labels around a circle, identifying orderings that differ only by rotation. This does not assert that those vertices form a cycle in $G$.

Frederickson works in the directed setting and specifies both a root vertex and an incident tree edge, called the \emph{root arc} \cite{frederickson}. The corresponding edge of the host must be used in the copy; our containment test imposes no such requirement on the edge to the arriving neighbor. Reversing a cyclic ordering allows the root to move to the other endpoint of the specified edge without changing the relevant expectation. For a tree with at least two edges, one endpoint is not a leaf. This symmetry removes the need for a separate leaf-root reduction before splitting into smaller trees.

These viewpoints help explain why averaging is so effective: an individual reveal order may be unhelpful, but reversible rearrangements relate the counts across all orders. Our version keeps those relations at the level of individual early-neighbor occurrences, where they can be followed through the examples.

\clearpage
\section{An Application: Monochromatic Trees}\label{sec:ramsey}
For a tree $T$ and an integer $q\ge1$, the \emph{Ramsey number} $R(T;q)$ is the least $N$ such that every coloring of the edges of $K_N$ with $q$ colors contains a copy of $T$ whose edges all have one color. Such a copy is \emph{monochromatic}. The following classical implication of the \ES\ statement is discussed in \cite{bounded} and highlighted by Wood \cite[Theorem~2]{wood}.

\begin{corollary}\label{cor:ramsey}
For every tree $T$ on $t\ge2$ vertices and integer $q\ge1$,
\[
R(T;q)\le q(t-2)+2.
\]
\end{corollary}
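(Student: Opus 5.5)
Set $N=q(t-2)+2$ and fix any coloring of the edges of $K_N$ with $q$ colors. We will find a monochromatic copy of $T$ by applying Theorem~\ref{thm:ES} to the densest color class. There are no early cases to dispose of: $N=q(t-2)+2\ge t$ because $q\ge1$, so the requirement $n\ge t$ in Theorem~\ref{thm:ES} holds automatically.

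First, pigeonhole on the edges. $K_N$ has $\binom N2=N(N-1)/2$ edges, so some color class $G_c$, viewed as a spanning subgraph on all $N$ vertices, has
\[
e(G_c)\ge\frac{N(N-1)}{2q}.
\]
Its average degree is therefore
\[
\dd(G_c)=\frac{2e(G_c)}{N}\ge\frac{N-1}{q}=\frac{q(t-2)+1}{q}=t-2+\frac1q>t-2.
\]
Thus $G_c$ is a simple graph on $N\ge t$ vertices with $\dd(G_c)>t-2$, and Theorem~\ref{thm:ES} gives a copy of $T$ inside $G_c$. All its edges have color $c$, so this copy is monochromatic. Since the coloring was arbitrary, $R(T;q)\le N$.

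The only step needing care is the strict inequality. Theorem~\ref{thm:ES} needs $\dd>t-2$, not $\ge$. This is exactly why the bound is $q(t-2)+2$ rather than $q(t-2)+1$: the extra vertex makes $(N-1)/q$ exceed $t-2$ by $1/q$. We must also keep all $N$ vertices in $G_c$, including any isolated ones, because the average is taken over all $N$ vertices in the bound above. Apart from that, the argument is a direct computation.
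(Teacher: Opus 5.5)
Your proof is correct and follows the paper's argument essentially line for line: you pigeonhole to a color class with at least $\binom{N}{2}/q$ edges, keep all $N$ vertices so its average degree is at least $(N-1)/q=t-2+1/q>t-2$, and apply Theorem~\ref{thm:ES} using $N\ge t$. Your remarks on why the strict inequality forces the ``$+2$'' and on keeping isolated vertices in the color class are accurate, and they add a little clarity beyond the paper's version.
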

\begin{proof}
Put $N=q(t-2)+2$. Some color occupies at least $\binom{N}{2}/q$ edges. The graph $H$ consisting of those edges and all $N$ vertices has
\[
\dd(H)\ge\frac{N-1}{q}=t-2+\frac1q>t-2.
\]
Since $N\ge t$, Theorem~\ref{thm:ES} gives a copy of $T$ in that color.
\end{proof}

For example, every blue--orange coloring of $K_6$ has a color with at least eight of its fifteen edges. That color's graph has average degree at least $16/6>2$, so it contains both four-vertex trees. The two copies need not use the same vertices.
\begin{pictureblock}
\panel{\begin{tikzpicture}[x=.85cm,y=.85cm]
\foreach \name/\xx/\yy in {a/0/1,b/1/2,c/2/2,d/3/1,e/2/0,f/1/0}{\coordinate (\name) at (\xx,\yy);}
\foreach \aa/\bb in {a/c,a/e,b/d,b/f,c/e,c/f,d/f}{\draw[accent,dashed,line width=.9pt] (\aa)--(\bb);}
\foreach \aa/\bb in {a/b,b/c,c/d,d/e,e/f,f/a,a/d,b/e}{\draw[chosen] (\aa)--(\bb);}
\foreach \name in {a,b,c,d,e,f}{\node[v] at (\name){$\name$};}
\end{tikzpicture}}{One coloring of $K_6$: eight solid blue edges and seven dashed orange edges.}
\hfill
\panel{\graphpic{a/0/1,b/1/2,c/2/2,d/3/1,e/2/0,f/1/0}{a/b,b/c,c/d,d/e,e/f,f/a,a/d,b/e}{a/b,a/f,a/d}}{The blue color class, with a star highlighted. The path $a-b-c-d$ is also present.}
\captionof{figure}{A monochromatic copy is a subgraph: edges of either color outside the selected tree do not matter. In the right panel, gray edges are other edges of the blue color class; orange edges have been omitted.}\label{fig:ramsey-example}
\end{pictureblock}

\clearpage
\appendix
\section{Basic Graph Theory}\label{app:graphs}
This appendix collects the terminology and elementary facts used in the paper.

\subsection{Graphs, Degrees, and Counting Each Edge Twice}
A \emph{finite simple undirected graph} consists of a finite set of vertices together with a set of edges, each joining two distinct vertices. There are no loops, no multiple edges between the same two vertices, and no directions on the edges. We write
\[
 G=(V(G),E(G)),\qquad n=|V(G)|,\qquad e(G)=|E(G)|.
\]
Here $|S|$ denotes the number of elements of a finite set $S$. The notation $uv\in E(G)$ means that $u$ and $v$ are joined by an edge; equivalently, they are \emph{adjacent}. Since edges are undirected, $uv$ and $vu$ name the same edge. Lines may cross in a drawing without making another vertex.

The \emph{degree} $\deg_G(v)$ of a vertex $v$ is its number of neighbors. When $G$ is understood, we write $\deg(v)$. Thus $0\le\deg(v)\le n-1$.  Moreover, if $v$ has degree zero, that means $v$ is \emph{isolated}, and if $v$ has degree $n-1$, that means $v$ is adjacent to every other vertex in $G$.
\begin{pictureblock}
\panel{\graphpic{a/0/1,b/1.4/2,c/2.8/1,d/1.4/0,e/4/0}{a/b,b/c,c/d,d/a,a/c}{a/b,a/c,a/d}}{The three highlighted edges contribute to $\deg(a)=3$.}
\hfill
\panel{\begin{tabular}{c|ccccc}vertex&$a$&$b$&$c$&$d$&$e$\\\hline degree&3&2&3&2&0\end{tabular}\par\medskip $3+2+3+2+0=10=2\cdot5$}{Five edges; ten incidences between vertices and edges.}
\captionof{figure}{Every edge has two ends, even though it is only one edge.}
\end{pictureblock}
What happens when we add together the degrees of all the vertices of $G$? In this sum, note that each edge $uv$ contributes one to $\deg(u)$ and one to $\deg(v)$, so it is counted exactly twice. This proves the \emph{handshaking identity}
\[\sum_{v\in V(G)}\deg(v)=2e(G).\]
For $n\ge1$, the \emph{average degree} is consequently
\[\dd(G)=\frac1n\sum_{v\in V(G)}\deg(v)=\frac{2e(G)}n.\]
In particular, if the average degree is $m$, the sum of the degrees is $mn$, and the number of edges is $mn/2$. The average $m$ need not be an integer. We will usually write it as $\dd(G)$ to keep it distinct from an edge count.

\subsection{Paths and Cycles}
A \emph{path} is a sequence of distinct vertices joined successively by edges. Its length is its number of edges, one fewer than its number of vertices. The notation $P_t$ denotes a path on $t$ vertices. A single vertex is also a path, of length zero.

A \emph{cycle} consists of at least three distinct vertices joined successively by edges, with an additional edge joining the last vertex to the first. Its length is both its number of edges and its number of vertices. The notation $C_t$ denotes a cycle on $t$ vertices. When we describe a path or cycle inside a larger graph, we may ignore other edges between its vertices.
\begin{pictureblock}
\panel{\graphpic{a/0/0,b/1/.6,c/2/0,d/3/.6}{a/b,b/c,c/d}{a/b,b/c,c/d}}{The path $a-b-c-d$: four vertices, length three.}
\hfill
\panel{\graphpic{a/0/0,b/0/1.5,c/1.5/1.5,d/1.5/0}{a/b,b/c,c/d,d/a}{a/b,b/c,c/d,d/a}}{The cycle $a-b-c-d-a$: four vertices, length four.}
\captionof{figure}{A path does not repeat a vertex; a cycle closes back at its starting vertex.}
\end{pictureblock}

\subsection{Connected Graphs}
A graph is \emph{connected} if every two vertices can be joined by a path. Otherwise it is \emph{disconnected}. A \emph{connected component} is a maximal connected part of the graph: its vertices can reach one another by paths, but cannot reach any vertex outside it.
\begin{pictureblock}
\panel{\graphpic{a/0/0,b/1/1,c/2/0,d/3/1,e/4/0}{a/b,b/c,a/c,c/d,d/e}{a/b,b/c,a/c,c/d,d/e}}{Connected: the edge $cd$ joins the two parts.}
\hfill
\panel{\graphpic{a/0/0,b/1/1,c/2/0,d/3/1,e/4/0}{a/b,b/c,a/c,d/e}{a/b,b/c,a/c,d/e}}{Disconnected: there is no path from $a$ to $e$.}
\captionof{figure}{The graph on the right has two connected components. Connectivity concerns paths, not whether every pair is adjacent.}
\end{pictureblock}

\subsection{Trees}
A \emph{tree} is a nonempty connected graph with no cycle. A vertex of degree one is called a \emph{leaf}. A \emph{star} $K_{1,s}$ is a tree consisting of one center joined to $s$ leaves, with no other edges.
\begin{pictureblock}
\panel{\graphpic{a/0/1,b/1/1,c/2/2,d/2/0,e/3/0}{a/b,b/c,b/d,d/e}{a/b,b/c,b/d,d/e}}{A tree: connected, with no cycle. Its leaves are $a,c,e$.}
\hfill
\panel{\graphpic{u/1/1,a/0/0,b/2/0,c/1/2}{u/a,u/b,u/c}{u/a,u/b,u/c}}{The star $K_{1,3}$: another tree.}
\captionof{figure}{A tree may branch. It need not look like a path or a star.}
\end{pictureblock}
A tree on $t$ vertices has exactly $t-1$ edges. One way to see this is to repeatedly remove a leaf, along with its incident edge, until one vertex remains. Every nontrivial tree has a leaf: an endpoint of a longest path has no possible additional neighbor without extending the path or creating a cycle. Removing a leaf preserves connectedness among the remaining vertices and cannot create a cycle, so the process continues. It removes $t-1$ vertices and exactly $t-1$ edges.

A tree is also \emph{bipartite}: its vertices can be colored with two colors so that each edge joins opposite colors. To see this, choose a root and color each vertex according to whether its distance from the root is even or odd. There is a unique path from the root to each vertex: connectedness gives a path, and two distinct paths would create a cycle. Its length defines the distance, and every edge of a tree joins a parent to a child one step farther from the root.

\subsection{Subgraphs and Containment}
A graph $G$ \emph{contains} a tree $T$ if there exists an isomorphic copy of $T$ as a subgraph of $G$. Extra edges of $G$ are permitted, and vertices of $G$ may be unused. We are asking for a subgraph, not necessarily an \emph{induced} subgraph. An induced subgraph on a chosen set keeps \emph{all} the edges between those vertices.
\begin{pictureblock}
\panel{\graphpic{a/0/0,b/1.3/1.2,c/2.6/0}{a/b,b/c,a/c}{a/b,b/c}}{A triangle contains the path $a-b-c$.}
\hfill
\panel{\graphpic{a/0/0,b/1.3/1.2,c/2.6/0}{a/b,b/c}{a/b,b/c}}{The selected path has only the two blue edges.}
\captionof{figure}{Containment does not require the chosen vertices to induce a tree.}
\end{pictureblock}
More generally, a \emph{subgraph} is obtained by selecting some vertices of a graph and some of its edges whose endpoints are selected. Two graphs are \emph{isomorphic} if their vertices can be put in a one-to-one correspondence preserving adjacency in both directions; informally, they have the same shape, regardless of their labels or drawings.

The \emph{complete graph} $K_s$ has $s$ vertices and every possible edge between distinct vertices. An \emph{independent set} has no edges between its vertices. These describe opposite extremes of adjacency, not necessarily opposite extremes of connectivity in a larger graph.

\section*{Statement on the Use of AI}
The underlying proof of the Erd\H{o}s--S\'os theorem presented here was discovered by GPT-6 Astra; its original form and subsequent human expositions are cited in Section~\ref{sec:history}. This article was developed through an extended dialogue between the author and OpenAI's ChatGPT. ChatGPT assisted with discussing the mathematics, proposing and evaluating alternative organizations of the proof, drafting and revising prose, producing LaTeX and TikZ code, checking internal consistency, and locating references. The essential changes in this paper include the reveal-and-stop formulation, the division of the inductive counts into explicit sets, and the examples and diagrams, all of which originated with the author, although they were refined through this collaborative process. The author reviewed everything that the AI produced and takes full responsibility for the mathematical arguments, exposition, and references.

\end{document}